
\documentclass[letterpaper, 10 pt, conference]{ieeeconf}  
\usepackage{amsmath}
\usepackage{amssymb}
\IEEEoverridecommandlockouts                              
\overrideIEEEmargins

\usepackage{graphics} 
\usepackage{epsfig} 
\usepackage{epstopdf}
\usepackage{mathptmx} 
\usepackage{times} 
\usepackage{amsmath} 
\usepackage{amssymb}  
\usepackage[linesnumbered,lined,boxed,commentsnumbered]{algorithm2e}
\usepackage[noend]{algpseudocode}
\usepackage{subfigure}

\newtheorem{mydef}{Definition}
\newtheorem{mythm}{Theorem}

\newtheorem{mylem}{Lemma}

\newtheorem{mycor}{Corollary}

\newcommand{\mbf}[1]{\mathbf{#1}}

\let\bbl\Bigl

\let\bbr\Bigr

%
%
%
%
%
%

\newcommand{\R}{\mathbb{R}}
\newcommand{\x}{\mathbf{x}}
\newcommand{\one}{\mathbf{1}}

\newcommand{\N}{\mathbb{N}}

\newcommand{\norm}[1]{\lVert{#1}\rVert}

\title{\LARGE \bf
Combining SOS and Moment Relaxations with Branch and Bound to Extract Solutions to Global Polynomial Optimization Problems
}


\author{Hesameddin Mohammadi, Matthew M. Peet
}

\begin{document}

\maketitle
\thispagestyle{empty}
\pagestyle{empty}

\begin{abstract}

In this paper, we present a branch and bound algorithm for extracting approximate solutions to Global Polynomial Optimization (GPO) problems with bounded feasible sets.
 The algorithm  is based on a combination of SOS/Moment relaxations and successively bisecting a hyper-rectangle containing the feasible set of the GPO problem.  At each iteration, the algorithm makes a comparison between the volume of the hyper-rectangles and their associated lower bounds to the GPO problem obtained by SOS/Moment relaxations to choose and subdivide an existing hyper-rectangle. For any desired accuracy, if we use sufficiently large order of SOS/Moment relaxations, then the algorithm is guaranteed to return a suboptimal point in a certain sense.  For a fixed order of SOS/Moment relaxations, the complexity of the algorithm is  linear in the number of iterations and polynomial in the number of constrains.  We illustrate the effectiveness of the algorithm for a 6-variable, 5-constraint GPO problem for which the ideal generated by the equality constraints is not zero-dimensional - a case where the existing Moment-based approach for extracting the global minimizer might fail.

\end{abstract}

\section{Introduction }
\textit{Global Polynomial Optimization} (GPO) is defined as optimization of the form
\vspace{-0 mm}
\begin{align}
\label{eq:GPO}
f^*:= \underset{x\in\mathbb{R}^n}{\min.} & \quad  f(x)\\
\text{subject to } & \;\;\text{ $ g_i(x)\ge 0$}\quad\, \text{ for } i=1,\cdots,s \nonumber\\
& \;\; \;\text{$ h_j(x) = 0$}\quad \text{ for } j=1,\cdots,t\nonumber \text{,}
\end{align}
where $f$, $g_i$, and $h_i$ are real-valued polynomials in decision variables $x$.  
As defined in Eq.~\ref{eq:GPO}, the GPO problem encompasses many well-studied sub-classes including  Linear Programming (LP), Quadratic Programming (QP), Integer Programming (IP), Semidefinite Programming (SDP) and Mixed-Integer Nonlinear Programming (MINLP)~\cite{Laurent2009}. Because of its generalized form, almost any optimization problem can be cast or approximately cast as a GPO, including certain NP-hard problems from economic dispatch~\cite{dispatch}, optimal power flow~\cite{powerflow} and optimal decentralized control~\cite{lavaei2013optimal}. As applied to control theory, GPO can be used for stability analysis of polynomial dynamical systems by, e.g., verifying polytopic invariants as in~\cite{BenSassi20123114}.

Although GPO is NP-hard, there exist a number of heuristics and algorithms that can efficiently solve special cases of GPO. For example, LP~\cite{LinearProgram,LinearProgram2}, QP~\cite{quadraticGPO}, and SDP all have associated polynomial-time algorithms. More broadly, if the feasible set is convex and the objective function is convex, then barrier functions and descent methods will typically yield a computationally tractable algorithm. When the problem is not convex, there also exist special cases in which the GPO problem is solvable. For example, in~\cite{groebnerbasis} the unconstrained problem was solved by parameterizing the critical points of the objective function via Groebner bases. In the special case of $ x\in\R^1 $, the problem was solved in~\cite{UnivariateCase,shor1987quadratic}. In addition, there exist several widely used heuristics which often yield reasonably suboptimal and approximately or exactly feasible solutions to the GPO problem (e.g.~\cite{SCIP1,SCIP2}), but which we will not discuss here in depth.

If we expand our definition of algorithm to include those with combinatorial complexity but finite termination time, then if the feasible set of the GPO problem is compact and the ideal generated by the equality constraints is radical and zero dimensional (typical for integer programming~\cite{grid}), then one may use the moment approach to solving sequential Greatest Lower Bound (GLB) problems to obtain an algorithm with finite termination time. Unfortunately, however, it has been shown that the class of problems for which these methods terminate is a strict subset of the general class of GPO problems~\cite{Nie2013} and furthermore, there are no tractable conditions verifying if the algorithm will terminate or bounds on computational complexity in the case of finite termination. In this paper, however, we take the idea of generating Greatest Lower Bounds and propose an alternative method for extracting approximate solutions that does not require finite convergence and, hence, has polynomial-time complexity.


Let
\vspace{-4.5 mm}
\begin{equation}
\label{feasibleSet}
S:=\{x \in \R^n\,:\, g_i(x)\ge 0,\; h_j(x)=0\}
\end{equation}
be the feasible set and $ x^* $ be an optimal solution of GPO Problem~\eqref{eq:GPO}. The Greatest Lower Bound (GLB) problem associated to GPO Problem~\eqref{eq:GPO} is defined as
\begin{align}
\label{eq:GLB}
\lambda^*:=&\underset{\lambda\in\mathbb{R}}{\max.} \quad  \lambda\\
&\text{subject to }\quad  f(x)-\lambda > 0 \quad, \forall x\in S. \nonumber
\end{align}
The GLB and GPO problems are closely related, but are not equivalent. For example, it is clear that $\lambda^* = f^*=f(x^*) $, where $f^*$ is as defined in Eq.~\eqref{eq:GPO}. Furthermore, as discussed in Section~\ref{section:BranchBound}, an algorithm which solves the GLB problem with complexity $O(k)$ can be combined with Branch and Bound to approximately solve the GPO for any desired level of accuracy $\epsilon$ with complexity  $O(\log(1/\epsilon)k)$, where we define an approximate solution to the GPO as a point $x \in \R^{n}$ such that $|x-x^*|\le \epsilon$ and $|f(x)-f(x^*)|<\epsilon$.


Many convex approaches have been applied to solving the GLB problem, all of which are based on parameterizing the cone of polynomials which are positive over the feasible set of the corresponding GPO problem. The most well-known of these approaches are Sum of Squares (SOS) programming~\cite{parrilo2000structured}, and its dual Moment-relaxation problem~\cite{lasserre2009moments}. Both these approaches are well-studied and have implementations as Matlab toolboxes, including SOSTOOLS~\cite{sostools} and Gloptipoly~\cite{Gloptipoly}. These approaches both yield a hierarchy of primal/dual semidefinite programs with an increasing associated sequence of optimal values $\{p^*_k\}_{k\in\mathbb{N}}$ (SOS) and $\{d^*_k\}_{k\in\mathbb{N}}$ (Moment) such that if we denote the OBV of the corresponding GPO problem  by $ f^* $, then under mild conditions, both sequences satisfy $p^*_k\le d_k^*\le f^*$ and $\lim\limits_{k\rightarrow\infty}p^*_k=\lim\limits_{k\rightarrow\infty}d^*_k = f^*$~\cite{schweighofer2005optimization}.
Moreover, if the feasible set of GPO Problem~\eqref{eq:GPO}, $ S $, as defined in~\eqref{feasibleSet},  is nonempty and compact, then there exist bounds on the error of SOS/Moment relaxations which scale as $|p_k^*-f^*|\cong \frac{c_2}{\sqrt[c_1]{\log(k)}}$\vspace{0.02in} for constants $c_1$ and $ c_2 $ that are functions of polynomials $ f $, $g_i$ and $h_j$~\cite{complexity}. The goal of this paper, then, is to combine the SOS and Moment approaches with a branch and bound methodology to create an algorithm for solving the GPO problem and extracting a solution.


Specifically, in Sec.~\ref{sec:ModBranchBound},  we propose a sequence of branch and bound algorithms, denoted by $ E_k $, such that for any $ k\in\N $ and for a given  GPO problem of  Form~\eqref{eq:GPO} with a bounded feasible set, Algorithm $ E_k $ in polynomial-time returns a point $ x_k\in\R^n $ that is sub-optimal to the GPO problem in the following sense. If $x_k$ is the sequence of proposed solutions produced by the sequence of algorithms $E_k$, we show that if the feasible set, $ S $, of Problem~\eqref{eq:GPO} is bounded, then there exist a sequence of feasible point $ y_k\in S$  such that  $\underset{k\rightarrow\infty}{lim}||x_k-y_k|| = 0$ and $ \underset{k\rightarrow\infty}{lim}f(y_k) = \underset{k\rightarrow\infty}{lim}f(x_k)= f^*$, where $ f^* $ is the OBV of the GPO problem.

The sequence of algorithms can be briefly summarized as follows. Algorithm $ E_k $ initializes with hyper-rectangle  $C_1:=\{ x\,|\,(x_i-\underbar c_{i,1})(\bar c_{i,1}-x_i)\ge 0, \, i=1,\cdots,n\}$ such that $ S\subset C_1 $. Then, at iteration $ m $, the algorithm forms two new hyper-rectangles by bisecting $ C_m $ by its longest edge ($\bar c_i-\underbar c_i$) and intersecting each of the new hyper-rectangles with $S$ to create two new GPO problems. Then, the algorithm computes a GLB estimate of the OBV of the new GPO problem by solving the corresponding $ k $'th-order SOS/Moment relaxations. If $ \lambda^*_m $ denotes the best lower bound to $ f^* $, obtained up to iteration $ m $, then $ C_{m+1} $ is determined to be the existing hyper-rectangle with a smallest volume subject to the constraint that the corresponding GLB is less that $\frac{m\,\eta}{l}+\lambda^*_m $, where  $ \eta>0  $ and  $ l\in\N $ are the design parameters.  Finally, after certain number of iterations, the algorithm terminates by returning the centroid of the last hyper-rectangle.

In Sec.~\ref{section: Complexity} we discuss the complexity of the proposed algorithm by first showing that for any $ k\ge 2 $, the feasible set of the $ k $'th order SOS relaxation associated to a branch is completely contained in that of each of its subdivided branches - implying that the lower bounds obtained by both the SOS and Moment relaxation (due to the duality between SOS and Moment methods) are increasing. Of course, at those branches with $ S\cap C_m=\emptyset $, these bounds will approach $ +\infty $. Next, we show that if the error of  the $ k $'th order SOS/Moment relaxations associated to the hyper-rectangles obtained through Algorithm $ E_k $ is bounded by $\frac{c_2}{\sqrt[c_1]{\log(k)}}\le\eta/(l+1)$, then each of the bisected hyper-rectangles is guaranteed to contain a feasible point that is $\eta$-suboptimal. Therefore, the feasible set will be reduced in volume as $\frac{1}{2^{m}}$, up to iteration $ m $. In other words, the number of iterations necessary to achieve a hyper-rectangle with a longest edge of the length $\epsilon$ is logarithmic in $1/\epsilon$.  Finally, since at each iteration the number of constraints is fixed and the complexity is linear in the number of iterations, we conclude that algorithm $E_k$ is polynomial-time.

In Sec.~\ref{section: NE}, we illustrate the effectiveness of the proposed algorithm by applying it to an example problem wherein the existing  Moment based approach fails to extract a solution. Finally we conclude in Sec.~\ref{section:Conclusion}.

\section{Notation}
\label{sec:Notation}
Let $\mathbb{N}^n$ be the set of n-tuples of natural numbers. We use $\mathbb{S}^n$ and $\mathbb{S}^{n+}$ to denote the symmetric matrices and cone of positive semidefinite matrices of size $n\times n$, respectively.
For any $ a,\,b\in \R^n $, we denote by $ C(a,b) $ the hyper-rectangle $ \{x\in\R^n|a\le x \le b\} $, where $y\ge 0$ is defined by the positive orthant. We use $\ell_{\infty}$ to denote the set of bounded infinite sequences. For any $ k\,,n\in\N $, let $\mathbb{N}^n_{(k)}:=\{b\in \mathbb{N}^n: |b|_{\ell_1}\leq k\}$, where $|b|_{\ell_1}:=\sum_{i=1}^n |b_i|$. Finally, we  denote the ring of multivariate polynomials with real coefficients as $\mathbb{R}$[$x$].

\section{Problem Statement}
\label{sec:ProblemStatement}
In this paper, we consider simplified GPO problems of the form:
\vspace{-3 mm}
\begin{align}
\vspace{-0.5 in}
\label{eq:GPOAsli}
f^*:= \underset{x\in\mathbb{R}^n}{\min.} & \quad  f(x)\\
\text{subject to } & \;\;\text{ $ g_i(x)\ge 0$}\quad\, \text{ for } i=0,\cdots,s \nonumber
\end{align}
where $f,g_i\in\mathbb{R}\text{[}x\mathbb{]}$. The class of problems in~\eqref{eq:GPOAsli} is equivalent to that in~\eqref{eq:GPO}, where we have simply replaced every $ h_i(x)=0 $ constraint with some $g_1(x)=h(x)\ge 0$ and $g_2(x)=h(x)\le 0$. For every problem of Form~\eqref{eq:GPOAsli}, we define the associated feasible set $
S:=\{x \in \R^n\,:\, g_i(x)\ge 0\}.$

In this paper, we assume $S\neq \emptyset$. Note that given $g_i$, one may use SOS optimization combined with Positivstellensatz results~\cite{stengle1974nullstellensatz} to determine feasibility of $S$.

\noindent \textbf{Proposed Algorithm} In this paper, we propose a GLB and Branch and Bound-based algorithm which, for any given $\epsilon> 0$, will return some $x\in\mathbb{R}^n$
for which there exists a point $y\in S $ such that:
\vspace{- 2 mm}
\begin{align}
\label{prop:output}
f(y)-f^* \leq \epsilon, \qquad \text{and} \qquad \quad\Vert y-x\Vert<\epsilon.
\end{align}
Furthermore $x$ itself is $\epsilon$-suboptimal in the sense that $\vert f(x)-f^*\vert \leq \epsilon$ and $g_i(x)\ge -\epsilon$.

Before defining this algorithm, however, in the following section, we describe some background on the dual SOS and Moment algorithms for generating approximate solutions of the GLB Problem.

\section{Background on Semidefinite Representations of SOS/Moment Relaxations}
In this section, we describe two well-known asymptotic algorithms which are known to generate sequences of increasingly accurate suboptimal solutions to the GLB problem - namely the SOS  and Moment approaches. Both these methods use Positivstellensatz results which parameterize the set of polynomials which are positive over a given semialgebraic set.

\subsection{Sum-of-Squares Polynomials}
In this subsection, we briefly define and denote sets of sums of squares of polynomials.

We denote monomials in variables $x \in \R^n$ as $x^\alpha:=\prod_{i=1}^n x_i^{\alpha_i}$ where $\alpha \in \N^n$. Monomials can be ordered using various orderings on $\N^n$. In this paper, we use the graded lexicographical ordering. This ordering is defined inductively as follows. For $a,b \in \N^n$,  $a\le b $ if  $\sum_{i=1}^{n}a_i < \sum_{i=1}^{n}b_i,$ or $a_1 = b_1$ and $[a_2,\cdots, a_n]\le [b_2,\cdots, b_n]$. Denote by $Z(x)$ the infinite ordered vector of all monomials, where $x^\alpha < x^\beta $ if $\alpha < \beta$. Because we have used the graded lexicographical ordering, if we restrict ourselves to the first $\binom{d+n}{d}$ elements of $Z$, then this is the vector of all monomials of degree $d$ or less. We denote this truncated vector as $Z_d(x)$ and the length of $Z_d$ as $\Lambda(d):=\binom{d+n}{d}$. Using this definition, it is clear that any polynomial can be represented as $p(x)=c^T Z_d(x)$ for some $c \in \R^{\Lambda(d)}$, where $d$ is the degree of $p$.

The vector of monomials can also be combined with positive matrices to completely parameterize the cone of sums-of-squares polynomials. Formally, we can denote the subset of polynomials which are the sum of squares of polynomials as
\vspace{-2.5   mm}
\begin{align}
\Sigma_S:=\{s \in \R[x]\,:\, s(x) =\sum_{i=1}^l p^2_i(x),\, p_i \in \R[x], \, l \in \N\}.
\end{align}
Clearly any element of $\Sigma_S$ is a nonnegative polynomial. Furthermore, if $p \in \Sigma_S $ and is of degree $2d$, then there exists a positive semidefinite matrix $\Omega\in\mathbb{S}^{\Lambda(d)+}$ such that
\vspace{-0.5   mm}
\[
p(x) = Z_d(x)^T \Omega Z_{d}(x).
\]
Conversely, any polynomial of this form, with $\Omega\ge 0$, is SOS. This parametrization of SOS polynomials using positive matrices will allow us to convert the GLB problem to an LMI. However, before defining this LMI approach, we must examine the question of positivity on semialgebraic subsets of $\R^n$.


%
%

\subsection{Putinar's Positivstellensatz, Quadratic Modules and the Archimedean Property}
Sum-of-Squares polynomials are globally non-negative. In this section, we briefly review Putinar's positivstellensatz which gives necessary conditions for a polynomial to be positive on the semiaglebraic set $  S:=\{x \in \mathbb{R}^n: g_i(x)\ge 0,\; i=1,\dots,s\},$ where $S\neq \emptyset$ and is compact.

Putinar's Positivstellensatz uses the $g_i$ which define $S$ to deduce a cone of polynomials which are non-negative on $S$. This cone is the quadratic module which we define as follows.
\begin{mydef}
	\label{def:kQuadM}
	Given a finite collection of polynomials $g_i\in \mathbb{R}[x]$, we define the quadratic module as
	\vspace{-2.5 mm}
	\begin{align*}
	M:= & \{p|
	p=\sigma_0+\sum_{i=1}^{s}\sigma_i g_i \quad\sigma_i \in \Sigma_S
	\},
	\end{align*}
	\vspace{-2 mm}
	and the degree-$k$ bounded quadratic module as
	\begin{align*}
	M^{(k)}:= & \{p|
	p=\sigma_0+\sum_{i=1}^{s}\sigma_i g_i \quad\sigma_i \in \Sigma_S \quad \text{deg}(\sigma_i g_i)\leq k
	\}.
	\end{align*}
\end{mydef}
Clearly, any polynomial in $M$ is non-negative on $S$. Furthermore, since $\Sigma_S$ parameterizes $M$ and positive matrices parameterize $\Sigma_S$, the constraint $p\in M_k$ can be represented as an LMI. Furthermore, if the module satisfies the Archimedean property, then Putinar's Positivstellensatz states that any polynomial which is positive on $S$ is an element of $M$. That is, $M$ parameterizes the cone of polynomials positive on $S$.

A quadratic module $M$ is said to be Archimedean if there exists some $p\in M$ and $R \neq 0$ such that $ p(x)= R^2-\sum_{i=1}^{n}x_i^2 $. We say that $\{g_i\}$ is an Archimedean representation of $S$ if the associated quadratic module is Archimedean. Note that the Archimedean property is a property of the functions $g_i$ which then define the quadratic module and not a property of $S$. Specifically, if $S$ is compact, then there always exists an Archimedean representation of $S$. Specifically, in this case, there exists an $R>0$ such that $\norm{x}\le R$ for all $x \in S$. Now define $ g_{s+1}(x)=R^2-\sum_{i=1}^{n}x_i^2 $.

%

\subsection{SOS approach to solving the GLB problem }
In this subsection, we briefly describe the use of SOS programming to define a hierarchy of GLB problems.

Consider  the GPO Problem~\eqref{eq:GPOAsli} where $\{g_i\}$ is an Archimedean representation of the feasible set, $ S $, with associated quadratic module $M$. We now define the degree-unbounded version of the SOS GLB problem.
\vspace{0   mm}
\begin{align}
\label{eq:GLBinM}
&f^*=\lambda^*:=\underset{\lambda\in\mathbb{R}}{\max.}  \quad  \lambda\quad\quad\quad\quad\\
&\text{subject to }\quad  f(x)-\lambda \in M. \nonumber\quad\quad\quad\quad
\end{align}
Since $M$ is Archimedian, it follows that $\lambda^*=f^*$ (where $f^*$ is as defined in~\eqref{eq:GPOAsli}). Although Problem~\eqref{eq:GLBinM} is convex, for practical implementation we must restrict the degree of the SOS polynomials which parameterize $M$ - meaning, we must restrict ourselves to optimization on $M^{(k)}$. This defines a new sequence of GLB problems as
\vspace{-0   mm}
\begin{align}
\label{eq: sosDegk}
p^*_k:=\quad&\quad \underset{\lambda\in\mathbb{R}}{\max.}  \quad  \lambda\quad\quad\quad\quad\\
&\text{subject to }\quad  f(x)-\lambda \in M^{(k)}. \nonumber\quad\quad\quad\quad
\end{align}
Clearly, $p_i^*\le p_j^* \le \lambda^*$ for any $i<j$. Additionally, it was shown in~\cite{parrilo2000structured} that $\underset{k\rightarrow\infty}{\lim }\;  p_k^* = p^*$.  Furthermore, it was shown in~\cite{schweighofer2005optimization,complexity} that bounds on the convergence rate of $p_k^* \rightarrow \lambda^*$ exist as a function of $g_i$, $f$ and $k$. Finally, the computational complexity of $p_k$ is equivalent to that  of a semidefinite program with order $(s+1)\Lambda(\lceil\frac{k}{2}\rceil)^2$ scalar variables.

%
%
%
%

\subsection{Moment approach to solving the GLB problem}
In this subsection, we briefly describe the Moment approach to solving the GLB problem.

Let  $\mathbb{K}$ denote the set of Borel subsets of $\mathbb{R}^n$ and let $\mathcal{M}(\mathbb{K}) $ be the set of finite and signed Borel measures on $\mathbb{K}$. The following lemma uses the set of probability measures with support on $ S $ to provide a necessary and sufficient condition for a polynomial to be positive over $ S $.
\begin{mylem}
	\label{lem:moment1}
	Given $S\in\mathbb{K}$ and $f:\mathbb{R}^n\rightarrow\mathbb{R}$ integrable over $S$, a polynomial $f(x)$ is nonnegative on $ S $ if and only if $\displaystyle \int_{S}f(x)\; d\mu\;\geq 0$ for all $\mu\in\mathcal{M}(\mathbb{K})$ such that $\mu(S)=1$ and $\mu(\R^n / S)=0$.
\end{mylem}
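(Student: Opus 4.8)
The plan is to prove the two implications separately, with the reverse direction handled by contraposition via point-mass (Dirac) measures.

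For the forward direction, I would assume $f(x) \geq 0$ for all $x \in S$ and take any admissible $\mu$, i.e.\ any $\mu \in \mathcal{M}(\mathbb{K})$ that is a probability measure concentrated on $S$ (so $\mu(S) = 1$ and $\mu(\mathbb{R}^n \setminus S) = 0$). Since the integrand is nonnegative on the support of $\mu$ and $\mu$ is a nonnegative measure, monotonicity (positivity) of the integral immediately yields $\int_S f(x)\, d\mu \geq 0$. This step is essentially definitional and uses only that $f$ is integrable over $S$, which is assumed.

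For the reverse direction, I would argue by contraposition. Suppose $f$ is not nonnegative on $S$, so that there exists a point $x_0 \in S$ with $f(x_0) < 0$. The key idea is that the family of admissible measures is rich enough to detect pointwise negativity: consider the Dirac measure $\delta_{x_0}$ supported at $x_0$. Because $x_0 \in S$, this is a Borel probability measure with $\delta_{x_0}(S) = 1$ and $\delta_{x_0}(\mathbb{R}^n \setminus S) = 0$, hence admissible, and evaluating against it gives $\int_S f(x)\, d\delta_{x_0} = f(x_0) < 0$. This violates the hypothesis that the integral is nonnegative for every admissible $\mu$, and the contradiction establishes $f(x) \geq 0$ on all of $S$.

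The argument is short, so there is no serious obstacle; the only genuine subtlety I would flag concerns the precise class of measures. The statement must be read with $\mu$ ranging over nonnegative (probability) measures, consistent with the preceding sentence that refers to the set of probability measures with support on $S$. If signed measures of total mass one were admitted, the forward implication would fail, since such a measure concentrated on $S$ can weight a positive integrand so as to make the integral negative. With this reading fixed, the heart of the proof is simply the observation that Dirac masses recover pointwise values of $f$, so that nonnegativity of $f$ on $S$ is equivalent to nonnegativity of all averages $\int_S f\, d\mu$.
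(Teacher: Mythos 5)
The paper states Lemma~\ref{lem:moment1} purely as background and gives no proof of its own (it is a standard fact underpinning the moment approach, with the surrounding discussion cited to Lasserre), so there is no in-paper argument to compare against; what can be assessed is whether your proof is correct, and it is. The forward direction is, as you say, just positivity of the integral: if $f\ge 0$ on $S$ and $\mu$ is a nonnegative measure with $\mu(S)=1$ and $\mu(\R^n\setminus S)=0$, then $f\ge 0$ holds $\mu$-almost everywhere and $\int_S f\,d\mu\ge 0$. The converse via Dirac masses is the standard device: if $f(x_0)<0$ for some $x_0\in S$, then $\delta_{x_0}$ is admissible ($\delta_{x_0}(S)=1$, $\delta_{x_0}(\R^n\setminus S)=0$, and $f$ is trivially $\delta_{x_0}$-integrable since a polynomial is Borel measurable and finite at every point), and $\int_S f\,d\delta_{x_0}=f(x_0)<0$ contradicts the hypothesis. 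Your caveat about the measure class is not pedantry but an actual correction to the statement as written: the paper defines $\mathcal{M}(\mathbb{K})$ as the set of finite \emph{signed} Borel measures, and under that literal reading the forward implication is false --- a signed measure with total mass one on $S$ can carry negative mass on a region where $f$ is large and positive, driving the integral negative even though $f\ge 0$ on $S$. The intended reading, nonnegative probability measures supported on $S$ (as the sentence preceding the lemma indicates), is exactly the one you adopt, and with it both directions go through cleanly.
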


Again, consider GPO Problem~\eqref{eq:GPOAsli} with the feasible set $ S $. Now, if we define the moment optimization problem
\vspace{-1.   mm}
\begin{align}
\label{eq:mom1}
\lambda^*:= &\quad \underset{\mu\in\mathcal{M}(\mathbb{K}),\lambda\in\mathbb{R}}{\max.} \quad  \lambda\\
&\text{subject to }\quad \int_{S}(f-\lambda)\; d\mu  \ge 0,\nonumber\\
&\mu(S) =1\qquad { and } \qquad \mu(\R^n / S)=0, \nonumber
\end{align}
then Lemma~\ref{lem:moment1} implies, using a duality argument, that $\lambda^*=f^*$  where $f^*$ is as defined in Problem~\eqref{eq:GPOAsli}~\cite{lasserre2009moments}.

Unfortunately, Problem~\eqref{eq:mom1} requires us to optimize over the space of measures $\mu\in\mathcal{M}(\mathbb{K})$. However, as yet we have no way of parameterizing these measures or imposing the constraints $\mu(S) =1$ and $\mu(\R^n / S)=0$. Fortunately, we find that a measure can be parameterized effectively using the moments generated by the measure. That is, any measure $\mu\in\mathcal{M}(\mathbb{K})$ has an associated ordered vector of moments, indexed by $\alpha \in \N^n$ using monomial $x^\alpha$ as
\vspace{-2  mm}
\[
\psi(\mu)_\alpha:=\int_{S}x^\alpha\; d\mu.
\]
Furthermore, if  $\mu(S)=1$ and $\mu(\R^n / S)=0$, then $\psi(\mu)^T c\ge 0 $ for all $c \in C_g$, where we define $ C_g:=\{c : c^T Z(x) \in M\}\subset \ell_{\infty}$, where $M$ is the module defined by the $g_i$ in $S:=\{x: g_i(x)\ge 0\}$. More significantly, the converse is also true. This means that if we add the constraint $\psi_1=1$ to ensure $\mu(S)=1$, then we may replace the measure variable $\mu$  by the moment variable $\psi$, as described.  However, this requires us to enforce the constraint $\psi^T c\ge 0$ for all $c$ such that $c^T Z(x) \in M$. This constraint, however, can be represented using semidefinite programming~\cite{schweighofer2005optimization,lasserre2009moments}. Finally, if $f(x) = d^T Z(x) $, we can enforce the integral constraint of optimization problem~\eqref{lem:moment1} as
\vspace{-2   mm}
\begin{align*}
\int_{S}f(x)-\lambda\; d\mu=d^T \psi -\lambda \ge 0.
\end{align*}
This allows us to formulate the equivalent GLB problem as
%
%
%
%
%
\vspace{-1   mm}
\begin{align}
\label{eq:mom3}
&d^*=\underset{\mathbf{y}\in\ell_\infty, \lambda\in\mathbb{R}}{\max.} \quad  \lambda\\
&\text{subject to }\quad  \mathbf{y}^T d-\lambda\geq 0,\quad \nonumber\\
&\quad\quad\quad\quad\quad\, \mathbf{y}^T c \geq 0\quad,\forall\; c\in C_g,\nonumber\\
&\quad\quad\quad\quad\quad\, \mathbf{y}_{0}=1\nonumber.
\end{align}
Then $d^*=\lambda^*$ and as for the SOS GLB problem, we define a sequence of truncations of the moment GLB problem
\vspace{-1   mm}
\begin{align}
\label{eq:momOfK}
d_k^*:=\;&\underset{\mathbf{y} \in \R^q, \lambda\in\mathbb{R}}{\max.} \quad  \lambda\\
&\text{subject to }\quad  \mathbf{y}^T d-\lambda\geq 0,\quad \nonumber\\
&\quad\quad\quad\quad\quad\, \mathbf{y}^T c \geq 0\quad,\forall\; c\in C^k_g,\nonumber\\
&\quad\quad\quad\quad\quad\, \mathbf{y}_{0}=1\nonumber,
\end{align}
where $q = \Lambda (k)$ and $C^k_g:=\{c \;:\; c^T Z_k(x) \in M^{(k)}\}$.

It has been shown that $p^*_k\leq d^*_k$  for all $k$ and furthermore $\underset{k\rightarrow\infty}{\lim }\;  p_k^*= \underset{k\rightarrow\infty}{\lim }\;  d_k^* = \lambda^*$. Moreover, if the interior of $S$ is not empty, then $ p_k^*= d_k^*$~\cite{schweighofer2005optimization}. Note that $\mathbf{y}^T c \geq 0\quad,\forall\; c\in C^k_g$ is an SDP constraint and the number of decision variables is $\Lambda (k)$. This implies that the computational complexity of $d_k$ and $p_k$ are similar.

In the following section, we combine the GLB problems defined by the SOS/Moment approach with a Branch and Bound sequence to approximately solve the GPO problem.

\section{Solving the GPO Problem using SOS, Moments and Branch and Bound}
\label{section:BranchBound}
In this section, we show that the following algorithm can be used to solve the GPO problem given a solution to the GLB problem.

\noindent \textbf{The Ideal Branch and Bound Algorithm}

At every iteration, we have a hyper-rectangle $A_i=[a_i,b_i]$;
\begin{enumerate}
	\item Initialize the algorithm;
	\item Bisect $A=[a_i,b_i] = [a',b']\cup[a'',b'']=A_1 \cup A_2$;
	\item Compute the Greatest Lower Bound of
	\vspace{-2.5   mm}
	\begin{align}
	\lambda_i^*:=&\underset{\lambda\in\mathbb{R}}{\max.} \quad  \lambda\\
	&\text{subject to }\quad  f(x)-\lambda > 0 \quad, \forall x\in S \cap A_i; \nonumber
	\end{align}
	\item If $\lambda_1^*>\lambda_2^*$, set $A=A_1$, otherwise $A=A_2$;
	\item Goto 2 ;
\end{enumerate}
At termination, we choose any $x \in A$, which will be accurate within  $|x-x^*|\le r 2^{-k/n}$.

Let us examine these steps in more detail.

\noindent \textbf{Initialize the algorithm} Since the set $S$ is compact, there exists some $r>0$ such that $S \subset B_r(0)$. We may then initialize $A=[-r \one , r\one ]$, where $ \one $ is the vector of all 1's.

\noindent \textbf{Bisect} Bisection of the hypercube occurs along the longest edge. Thus, after $n$ iterations, we are guaranteed a two-fold increase in accuracy. As a result, the largest edge of the hypercube diminishes as $2^{-k/n}$.

\noindent \textbf{Compute the Greatest Lower bound} We assume that our solution to the GLB problem is exact. In this case, we are guaranteed that an optimizing $x$ will always lie in $A_i$.

\subsection{Complexity of the Ideal Branch and Bound Algorithm}

In this subsection, we  show that any exact solution to the GLB can be used to solve the GPO problem with arbitrary accuracy in a logarithmic number of steps using the Ideal Branch and Bound algorithm.

Suppose $G$ is a GLB Problem of the Form~\eqref{eq:GLB} with solution $\lambda^*$. Define the algorithm $H: G\mapsto \lambda^*$ as $\lambda^*= H(G)$. Further suppose $H$  has time-complexity $O(k)$, where $k$ is a measure for the size of $G$. In the Ideal Branch and Bound algorithm, if we use $ H $ to perform Step~(3), it is straightforward to show that for any $ \epsilon>0, $ after $m= 2n(c_1+\log\frac{1}{\epsilon}) $ iterations, if   $A_m=[ a,b]$, then $\max_i |b_i-a_i|\le\epsilon$ and GPO Problem~\eqref{eq:GPOAsli} has a minimizer $ x^*\in C(a,b) $, where $c_1$ depends on the size of $S$ and $ n $ is the number of variables. Now since all the GLB problems defined in Step~(3) of the algorithm are of equal size, $ k $, the complexity of the Ideal Branch and Bound algorithm for a given $ \epsilon $, is $ O(2nk(c_1+\log\frac{1}{\epsilon})) $.

In this section, we considered the ideal case when the GLB can be solved exactly. In the following section we adapt this algorithm to the case when sequential algorithms such as SOS/Moment problems are used to solve the GLB problem. In this case, our approach will also be defined as a sequence of approximation algorithms.

\section{Modified Branch and Bound Algorithm}
\label{sec:ModBranchBound}
In this section, we present a slightly modified branch and bound algorithm that combined with SOS/Moment relaxations, can approximate the solution to the GPO problem to any desired accuracy, in a certain sense.

\noindent \textbf{The Modified Branch and Bound Algorithm}
At every iteration, we have an active hyper-rectangle $A=[a,b]$ and a set of feasible rectangles $Z=\{[a_i,b_i]\}_i$ each with associated GLB $\lambda_i$.
\begin{enumerate}
  \item Initialize the algorithm
  \item Bisect $A=[a,b] = [a',b']\cup[a'',b'']=A_1 \cup A_2$
  \item Compute the Greatest Lower Bound of
   \begin{align}
   \lambda_{i}^*:=&\underset{\lambda\in\mathbb{R}}{\max.} \quad  \lambda\\
   &\text{subject to }\quad  f(x)-\lambda > 0 \quad, \forall x\in S \cap A_i. \nonumber
   \end{align}
  \item If $\lambda_i^* \le \lambda^*+\epsilon$, add $A_i$ to $Z$.
  \item Set $A=Z_i$ where $Z_i$ is the smallest element of $Z$.
  \item Goto 2
\end{enumerate}

At termination, we choose any $x \in A$, which will be accurate within  $|x-x^*|\le r 2^{-k/n}$.

\subsection{Problem Definition and SOS/Moment Subroutine}

Consider GPO Problem~\eqref{eq:GPOAsli} and suppose the corresponding  feasible set
$
S:=\{x \in \mathbb{R}^n: g_i(x)\ge 0\},
$
is nonempty and compact with $ S\subset C(a,b) $, for some $ a,b\in\R^n $ with associated Archimedean quadratic module $M$.

Before defining the main sequential algorithm $E_k$, we will define the $k^th$-order SOS/Moment GLB subroutine, denoted $B_k$, which calculates the GLB in Step~(3) of the Modified Branch and Bound Algorithm.

\noindent\textbf{SOS/Moment Subroutine $\mbf{\lambda^*_k=B_k[a,b]}$}

Given $ A = [a,b]$, define the polynomials $w_{i}(x):=(b_i-x_i)(x_i-a_i)$. These polynomials are then used to define the modified feasible set $S \cap A$ as
\begin{align}
     \label{eq:Semialgebraic_v2}
     S_{ab}:=\{x \in \mathbb{R}^n:& g_i(x)\ge 0,\;\forall i: 1\le i\le s,\\
      \;& w_{j}(x)\ge 0,\;\forall j:\; 1\le j\le n \}\nonumber,
\end{align}
and the corresponding modified degree-$ k $ bounded quadratic module as
\begin{align}
\label{eq:ModifiedQuadraticModuleK}
	M_{a\,b}^{(k)}:= & \bbl\{p \, : \, p=\sum_{i=0}^{s}\sigma_i g_i+\sum_{i=s+1}^{s+n}\sigma_i w_i, \quad\sigma_i \in \Sigma_S, \\
&\qquad \qquad \qquad \text{deg}(\sigma_i g_i)\leq k, \;\; \text{deg}(\sigma_i w_i)\leq k
	\bbr\}\nonumber,
\end{align}
where $ g_0(x)=1 $.
This allows us to formulate and solve the modified $k$-th order SOS GLB problem
\begin{align}
p_k^*:=\quad& \underset{\lambda \in\mathbb{R}}{\max.}  \quad  \lambda\quad\quad\quad\quad\\
&\text{subject to }\quad  f(\x)-\lambda \in M_{a\,b}^{(k)} \nonumber\quad\quad\quad\quad
\end{align}
and the corresponding dual GLB moment problem as described in the preceeding section. The subroutine returns the value $\lambda_k^*=p_k^*$.

\subsection{Formal Definition of the Modified Branch and Bound Algorithm, $E_k$}

We now define a sequence of Algorithms $ E_k$ such that for any $ k\in\N $, $ E_k $ takes GPO Problem~\eqref{eq:GPOAsli} and returns an estimated feasible point $ x^*$.

\noindent\textbf{The Sequence of Algorithms $ E_k$:}

In the following, we use the notation $a \leftarrow b$ to indicate that the algorithm takes value $b$ and assigns it to $a$. That is, $a=b$. In addition parameter $ 0<\eta<1 $ represents error tolerance for trimming branches and in Theorem~\ref{theo: complexity} is set by the desired accuracy as $\eta < \epsilon$. The parameter $l$ represents the number of branch and bound loops and in Theorem~\ref{theo: complexity} is set by the desired accuracy as $l > n \log_2(\frac{L\sqrt{n}}{\eta})$ where $n$ is the number of variables and $L$ is a bound on the radius of the feasible set.


The inputs to the following algorithm $E_k$ are the functions $\{g_i\}$ and $f$, the initial hyper-rectangle such that $S \subset [a,b]$, and the design parameters $\eta$, $l$. The output is the estimated feasible point, $x$.

\noindent\textbf{Algorithm $ E_k: $}

 {\tt input:} $ \eta>0 $, $ l\in\N $, $ a,b\in\R^n $, $ f,g_1,\dots,g_s\in\R $[$ x $].

 {\tt output:} $ x\in\R^n $ (as an approximate solution to GPO Problem~\eqref{eq:GPOAsli}).

 {\tt Initialize:}

\emph{ $\quad a(0) \leftarrow a; \;\;b(0)\leftarrow b ; \;\; m\leftarrow0;\;\;
\lambda(0)\leftarrow B_k(a(0),b(0));$}

{\tt While} ($ m<l $)$ \;:\{ $
\emph{\begin{align}
\label{const: choiceOfBranch}
j^*\leftarrow&\underset{j\in\{0,\dots,m\}}{\arg\min} \,\lambda(j)\nonumber;\hspace{ 2.1 in}\\
i^*\leftarrow&\underset{j\in\{0,\dots,m\}}{\arg\min}   \quad  \prod_{i=1}^{n}(b(j)_i-a(j)_i) \nonumber\\
&\text{subject to }\quad  \lambda(j)\le \lambda(j^*)+\frac{m\,\eta}{1+l}; \\
a^*\leftarrow&a(i^*);\quad\quad b^*\leftarrow b(i^*); \nonumber\\
r^*\leftarrow&\underset{j\in\{1,\dots,n\}}{\arg \max} (b^*_j-a^*_j); \quad\tilde{a} \leftarrow a^*; \quad\hat{b}\leftarrow b^*\nonumber;
\end{align}
\begin{align*}
&\text{{\tt For }}\text{{\tt $ r $ from 1 to $ n :\;\{$}}\\
\tilde{b}_{r}&\leftarrow\begin{cases} \frac{b^*_r+a^*_r}{2} & \text{if }r = r^*\\ b^*_r & \text{otherwise}\end{cases} ;\quad \hat{a}_{r}\leftarrow\begin{cases} \frac{b^*_r+a^*_r}{2} &\text{if } r = r^*\\ a^*_r & \text{otherwise,}\end{cases}; \}
\end{align*}}
\begin{align*}
& \tilde{\lambda}\leftarrow B_k(\tilde{a},\tilde{b}); &&\hat{\lambda}\leftarrow  B_k(\hat{a},\hat{b}); &&& m\leftarrow m+1;\\
& a(i^*)\leftarrow\tilde{a};  && b(i^*)\leftarrow\tilde{b};  &&&\lambda(i^*)\leftarrow\tilde{\lambda};\\
& a(m)\leftarrow\hat{a}; && b(m)\leftarrow\hat{b}; &&&\lambda(m)\leftarrow\hat{\lambda};\}
\end{align*}

{\tt Return}  $ x:=\frac{a(l)+b(l)}{2}$;


In the following section we will discuss the complexity and accuracy of the sequence of Algorithms $ E_k $.

\section{Convergence and Complexity of $E_k$}
\label{section: Complexity}
In this section, we first show that for any $ k\in\N $, the greatest lower bounds obtained by the subroutine $B_k$ increase at each iteration of the Branch and Bound loop. Next we show that for any desired accuracy, there exists a sufficiently large $ k $, such that Algorithm $ E_k $ returns a proposed solution with that accuracy.

In the following lemma, we use Lemma~\ref{lem:abcd} from the Appendix to show that for any $ a_1,\;a_2,\;b_1,\;b_2\in\R^n $ such that $ a_1\le a_2 < b_2\le b_1\in\R^n $, the feasible set of the SOS problem solved in Subroutine $ B_k(a_1,b_1) $ is contained in that of Subroutine $ B_k(a_2,b_2) $.

\begin{mylem}
	\label{lem:C1C2}
	For any $ k\in\N$ and $ a\le b\in\R^n $, let $ M^{(k)}_{a\,b} $ be the modified degree-k bounded quadratic module associated to polynomials $ g_1,\dots,g_s $, as defined in~\eqref{eq:ModifiedQuadraticModuleK}. If $ \gamma \le \alpha<\beta\le \delta\in\R^n $, then  $M^{(k)}_{\gamma\,\delta}\subset M^{(k)}_{\alpha\,\beta}$, for all $ k\ge 2 $.

\end{mylem}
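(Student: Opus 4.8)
The plan is to prove the module containment directly, by showing that every generator appearing in a representation of an element of $M^{(k)}_{\gamma\,\delta}$ can be rewritten as an admissible element of $M^{(k)}_{\alpha\,\beta}$ without raising any degree above $k$. The first observation is that the two modules are built from the \emph{same} constraint polynomials $g_0=1,g_1,\dots,g_s$; they differ only in the box polynomials $w_i^{\gamma\delta}(x)=(\delta_i-x_i)(x_i-\gamma_i)$ versus $w_i^{\alpha\beta}(x)=(\beta_i-x_i)(x_i-\alpha_i)$. Hence, taking an arbitrary $p=\sum_{i=0}^{s}\sigma_i g_i+\sum_{i=1}^{n}\tau_i\,w_i^{\gamma\delta}\in M^{(k)}_{\gamma\,\delta}$ with $\sigma_i,\tau_i\in\Sigma_S$ and all degrees bounded by $k$, the $g_i$-terms already lie in $M^{(k)}_{\alpha\,\beta}$, so it suffices to show that each box term $\tau_i\,w_i^{\gamma\delta}$ belongs to $M^{(k)}_{\alpha\,\beta}$.

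The engine is a one-variable decomposition, which is exactly what Lemma~\ref{lem:abcd} should supply: for scalars $\gamma_i\le\alpha_i<\beta_i\le\delta_i$ there exist a constant $c_i\ge 1$ and a nonnegative (hence SOS, being univariate) quadratic $\rho_i$ in the single variable $x_i$ such that $w_i^{\gamma\delta}(x)=\rho_i(x_i)+c_i\,w_i^{\alpha\beta}(x)$. I would establish this by setting $\rho_i:=w_i^{\gamma\delta}-c_i\,w_i^{\alpha\beta}$ and choosing $c_i$ so that $\rho_i\ge 0$: its leading coefficient equals $c_i-1\ge 0$, and a short discriminant computation — cleanest after the affine normalization sending $[\alpha_i,\beta_i]$ to $[0,1]$ — shows that the admissible values of $c_i$ form an interval contained in $[1,\infty)$, so a valid constant always exists. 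In the symmetric case ($\delta_i-\beta_i=\alpha_i-\gamma_i$) one may simply take $c_i=1$ and $\rho_i$ a nonnegative constant.

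Substituting this identity gives $\tau_i\,w_i^{\gamma\delta}=\tau_i\rho_i+(c_i\tau_i)\,w_i^{\alpha\beta}$. Here $\tau_i\rho_i$ is a product of SOS polynomials, hence SOS, and is absorbed into the $g_0=1$ slot of $M^{(k)}_{\alpha\,\beta}$, while $c_i\tau_i$ is SOS because $c_i\ge 0$. The decisive point — and the reason the hypothesis $k\ge 2$ enters — is that $c_i$ is a constant and $\rho_i$ has degree at most two, so that $\deg(\tau_i\rho_i)\le\deg(\tau_i)+2=\deg(\tau_i\,w_i^{\gamma\delta})\le k$ and $\deg(c_i\tau_i\,w_i^{\alpha\beta})=\deg(\tau_i\,w_i^{\gamma\delta})\le k$; no degree budget is violated. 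Collecting every $\tau_i\rho_i$ together with the original $\sigma_0$ into a single SOS multiplier of $g_0$ (a sum of SOS terms each of degree $\le k$ is SOS of degree $\le k$) then exhibits $p$ as an element of $M^{(k)}_{\alpha\,\beta}$, which is the claim.

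The main obstacle I anticipate is the one-dimensional decomposition itself: proving that a single \emph{constant} multiplier $c_i$ suffices, rather than a higher-degree SOS multiplier, while still keeping $\rho_i$ nonnegative. This is precisely what must be isolated in Lemma~\ref{lem:abcd}, and it is essential, since only a constant multiplier preserves the degree bounds needed to stay inside $M^{(k)}_{\alpha\,\beta}$. The delicate case is the asymmetric one, where the linear term of $\rho_i$ does not vanish, so its nonnegativity cannot be read off by inspection and must be secured through the discriminant argument above.
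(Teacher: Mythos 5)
Your proposal is correct and follows essentially the same route as the paper: the paper's Lemma~\ref{lem:abcd} supplies exactly your one-variable decomposition, writing $w_{j,2}(x)=p_j^2\,w_{j,1}(x)+q_j^2\,(x_j+r_j)^2$ with a nonnegative \emph{constant} multiplier (indeed $\alpha=\beta+1\ge 1$, matching your $c_i\ge 1$), and the paper then performs the same substitution, absorbing the SOS remainder terms into the $\sigma_0$ slot with the same degree bookkeeping under $k\ge 2$. The only difference is cosmetic: you establish the one-variable identity by a discriminant existence argument (allowing any nonnegative univariate quadratic as remainder), whereas the paper constructs explicit closed-form coefficients with a case analysis.
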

\begin{proof}
	
	For any $ j=1,\dots,n, $ let $w_{j,1}(x):=(\beta_j-x_j)(x_j-\alpha_j),$ and $w_{j,2}(x):=(\delta_j-x_j)(x_j-\gamma_j)$.  Since $ \gamma_j\le\alpha_j<\beta_j\le\delta_j,$  then it is followed from Lemma~\ref{lem:abcd} in the Appendix that  there exist $p_j,q_j,r_j\in\mathbb{R}$ such that
	\[
	w_{j,2}(x) = p_j^2\; w_{j,1}(x) \;+\; q_j^2 \; (x_j+r_j)^2.
	\]
	Now, if $h\in M^{(k)}_{\gamma\,\delta} $, we will show that $ h\in M^{(k)}_{\alpha\,\beta} $. By definition, there exist $\sigma_i,\omega_{j,2}\in \Sigma_S$ such that $h=\sum_{i=0}^{s}\sigma_i g_i+\sum_{j=1}^{n}\omega_j w_{j,2}$, where $ g_0(x)=1 $. Hence,  we can plug in the expression for $w_{j,2}$ to get
	\begin{align*}
	&h = \sum_{i=0}^{s}\sigma_i\cdot g_i+\sum_{j=1}^{n}\omega_j\cdot (p_j^2\cdot w_{j,1} + q_j^2\cdot (x_j+r_j)^2)\\
	=& \underbrace{(\sigma_0+\sum_{j=1}^{n}q_j^2\cdot\omega_j\cdot (x_j+r_j)^2)}_{\sigma_{0\text{ new}}}+\sum_{i=1}^{s}\sigma_i\cdot g_i+\sum_{j=1}^{n}\underbrace{p_j^2\,\omega_j}_{\omega_{j\text{ new}}}\cdot w_{j,1}.
	\end{align*}
	Clearly $\sigma_{0\text{ new}}, \omega_{j\text{ new}} \in \Sigma_{S}$. Furthermore, since $ k\ge2 $,  $\deg(\sigma_{0\text{ new}})\le k$, and $\deg(\omega_{j\text{ new}}\cdot w_{j,1})\le k$ which implies that $h\in M^{(k)}_{\alpha\,\beta}$.
\end{proof}

Now suppose $\{g_i\}$ all have degree $ d $ or less. Then for any $ k\ge d+2 $ and for any hyper-rectangles $C(c,d)\subset C(a,b)$, if $ \lambda_{(a,b)} $ and $ \lambda_{(c,d)} $ are the solutions obtained by Subroutines $B_k(a,b) $ and $ B_k(c,d)$ applied to GPO Problem~\eqref{eq:GPOAsli}, then  Lemma~\ref{lem:C1C2} shows that $ \lambda_{(a,b)}\le \lambda_{(c,d)} $. Now, for a fixed $ k\in\N $,  let $ \eta $ and $ l $ be the design parameters of Algorithm $ E_k $ applied  to GPO Problem~\eqref{eq:GPO}. For $ m=0,\dots,l $, let $ (\lambda^*)_m:=\lambda(j^*) $,  where $ j^* $ is as we defined in iteration $ m $ of the loop in Algorithm $ E_k $. Using Lemma~\ref{lem:C1C2}, it is straightforward to show that $ (\lambda^*)_m\le(\lambda^*)_{m+1} $ for $ m\le l-1 $.

In the next theorem, we will show that for any  given $ \epsilon>0 $, there exist $ k\in\N $ such that Algorithm $ E_k $ applied to GPO Problem~\eqref{eq:GPOAsli} will  provide a point $ x\in\R^n $ satisfying~\eqref{prop:output}.

\begin{mythm}
	\label{theo: complexity}
	Suppose GPO Problem~\eqref{eq:GPOAsli} has a nonempty and compact feasible set $ S $. Choose $ a,b\in\R^n $ such that $S\subset C(a,b)$. For any desired accuracy, $ 0<\epsilon<1 $, let $l > n \log_2(\frac{L\sqrt{n}}{\eta})$ and $\eta <\epsilon$ where $L=\max_i b_i-a_i$. Then there exists a $k \in \N$ such that if $x=E_k(\eta,l,a,b,f,g_i)$, then there exists a feasible point $y\in S $ such that $f(y)-f^* \leq \epsilon$ and $\Vert y-x\Vert<\epsilon$, where $ f^* $ is the OBV of GPO Problem~\eqref{eq:GPOAsli}.
\end{mythm}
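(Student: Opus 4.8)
The plan is to track, at each iteration $m$ of Algorithm $E_k$, two running quantities: the best lower bound $(\lambda^*)_m=\lambda(j^*)$ and the volume $V_m$ of the selected hyper-rectangle $A_{i^*}=C(a^*,b^*)$. The conclusion follows once I show (i) the rectangle selected in the last iteration contains a feasible, nearly optimal point, and (ii) that rectangle has diameter below $\epsilon$. First I would fix $k$ using the convergence bound $|p^*_k-f^*|\cong c_2/\sqrt[c_1]{\log k}$, choosing $k$ large enough that the $k$-th order relaxation error is at most $\eta/(l+1)$ for every sub-rectangle of $C(a,b)$; thus each rectangle $A_j$ produced by the run satisfies $f^*_{A_j}-\lambda(j)\le \eta/(l+1)$, where $f^*_{A_j}:=\min_{S\cap A_j} f$, with $f^*_{A_j}=+\infty$ when $S\cap A_j=\emptyset$.

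Next I would pin down $(\lambda^*)_m$. Since the current rectangles always cover $C(a,b)\supset S$, some rectangle $A_{j_0}$ contains the global minimizer $x^*$, giving $\lambda(j_0)\le f(x^*)=f^*$ and hence $(\lambda^*)_m\le f^*$. Conversely, the minimizing rectangle $A_{j^*}$ must meet $S$ (otherwise $\lambda(j^*)=+\infty$ is not minimal), so the error bound and $f^*_{A_{j^*}}\ge f^*$ yield $(\lambda^*)_m\ge f^*-\eta/(l+1)$. Thus $(\lambda^*)_m$ stays within $\eta/(l+1)$ of $f^*$ for every $m$.

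The central step is the volume-halving estimate $V_{m+1}\le\half V_m$. The eligibility constraint defining $i^*$ gives $\lambda(i^*)\le(\lambda^*)_m+m\eta/(1+l)\le f^*+\eta<\infty$, so $S\cap A_{i^*}\neq\emptyset$ and its minimizer $y$ obeys $f(y)=f^*_{A_{i^*}}\le\lambda(i^*)+\eta/(l+1)\le(\lambda^*)_m+(m+1)\eta/(l+1)$. After bisecting $A_{i^*}$ along its longest edge, $y$ lies in one child $A'$ of half the volume, whose lower bound satisfies $\lambda(A')\le f^*_{A'}\le f(y)\le(\lambda^*)_m+(m+1)\eta/(l+1)$. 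Because $(\lambda^*)_{m+1}\ge(\lambda^*)_m$ (the monotonicity established via Lemma~\ref{lem:C1C2}), $A'$ meets the eligibility test at iteration $m+1$, so the smallest eligible rectangle then has volume $V_{m+1}\le\half V_m$. Iterating yields $V_{l-1}\le V_0\,2^{-(l-1)}$, so the rectangle selected in the last iteration has bisection depth at least $l-1$; a standard balanced-edge property of longest-edge bisection then bounds its longest edge by $L\,2^{-\lfloor(l-1)/n\rfloor}\le\eta/\sqrt n$, hence its diameter by $\eta$.

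Finally, collecting the pieces: in the last iteration $A_{i^*}$ contains a feasible $y$ with $f(y)-f^*\le l\eta/(l+1)<\eta\le\epsilon$, and the returned point $x=(a(l)+b(l))/2$ is the centroid of a child of $A_{i^*}$, so both $x$ and $y$ lie in $A_{i^*}$ and $\norm{y-x}\le\mathrm{diam}(A_{i^*})\le\eta<\epsilon$, which is exactly~\eqref{prop:output}. The main obstacle is the uniform choice of $k$ in the first step: the rectangles generated by $E_k$ themselves depend on $k$, and the constants $c_1,c_2$ depend on the defining polynomials, which now include the box generators $w_j(x)=(b_j-x_j)(x_j-a_j)$ that vary with the rectangle. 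I would resolve this circularity by noting that every rectangle is a sub-rectangle of the fixed compact $C(a,b)$, so the $w_j$ have uniformly bounded coefficients and share a common Archimedean radius; the convergence bound can then be taken uniform over all admissible rectangles, and a single $k$ suffices regardless of which rectangles the run produces.
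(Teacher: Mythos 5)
Your proposal is correct and follows the same skeleton as the paper's proof: choose $k$ so that every subroutine call $B_k$ is accurate to within $\eta/(1+l)$, sandwich $(\lambda^*)_m$ in $[f^*-\eta/(1+l),\,f^*]$, show that after each bisection at least one child of the selected rectangle remains eligible, deduce geometric shrinkage of the selected rectangle, and finish with the diameter bound implied by $l > n\log_2(L\sqrt{n}/\eta)$. One organizational difference is to your credit: where the paper proves child-eligibility by contradiction (the chain \eqref{eq:inequalitiesProof}--\eqref{eq:akharin}) and then runs a separate induction to conclude that the selected rectangles are \emph{nested}, you argue directly that the child $A'$ containing the minimizer of $f$ over $S\cap A_{i^*}$ satisfies $\lambda(A')\le f^*_{A'}\le (\lambda^*)_m+(m+1)\eta/(1+l)$ and is therefore eligible, and you then need only the volume-halving estimate $V_{m+1}\le V_m/2$ rather than nestedness, since the selection rule minimizes volume among eligible rectangles. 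This is cleaner and slightly more robust: it remains valid even if the looser threshold at iteration $m+1$ readmits a previously trimmed smaller rectangle, a case the paper's nestedness induction handles only implicitly through the partition structure.

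The one genuine soft spot is your uniform choice of $k$. You correctly identify the circularity (the rectangles produced depend on $k$, and the constants $c_1,c_2$ depend on the generators, which now include the box polynomials $w_j$ varying with the rectangle), but your resolution --- uniformly bounded coefficients and a common Archimedean radius yielding a convergence rate uniform over the \emph{continuum} of sub-rectangles of $C(a,b)$ --- is not supported by the cited bounds as stated: the dependence of those constants on the defining polynomials is not known to be uniform or continuous in the sense you need. The paper closes this rigorously with a finiteness argument that your own setup already makes available: every rectangle reachable by at most $l$ longest-edge bisections of $C(a,b)$ has vertices on the dyadic grid of depth $l$, so the family $\mathcal{P}$ of candidate rectangles is finite and independent of $k$, and one simply takes $k=\max_\alpha k_\alpha$ over this finite family. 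Replacing your uniformity claim with this observation closes the gap with no other change to your argument. (The residual floor/off-by-one slack in converting $l>n\log_2(L\sqrt{n}/\eta)$ into a diameter bound of $\eta$ is shared with the paper's own proof and is immaterial.)
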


\begin{proof}
Define $ \mathcal{P} $ to be the set of all possible hyper-rectangles generated by the branching loop of Algorithm $ E_k$ (for any $k$) with number of branches bounded by $l$. The vertices of all elements of $ \mathcal{P}$ clearly lie on a grid with spacings $ \frac{|a_i,b_i|}{2^l} $. Therefore, the cardinality $ |\mathcal{P}| $ is finite and bounded as a function of $l$, $a$ and $b$. It has be shown that for any $ C_\alpha:=C(e,f)\in\mathcal{P} $, there exists a $ k_\alpha\in\N $ such that for any $ k'\ge k_\alpha $, the solution of Subroutine $B_{k'}( e,f)$ is accurate with the error tolerance $\frac{\eta}{1+l} $. Now define $ k:=\max\{ k_\alpha\;| \; C_\alpha\in\mathcal{P}\} $.
	
Will now show that Algorithm $ E_{k} $  returns a point $x$ with the desired accuracy. First, we show that Algorithm $ E_k $ generates exactly $ l $ nested hyper-rectangles. The proof is by induction on $ m $.

	For $ m=0,\dots,l-1 $, let $ (a)_m:=a(i^*) $, $ (b)_m:=b(i^*) $, $ (C)_m:=C((a)_m,(b)_m) $, $ (\lambda)_m := B_k((a)_m,(b)_m)$, $ (\tilde{a})_m:=\tilde{a}$, $(\tilde{b})_m:=\tilde{b} $,  $ (\hat{a})_m:=\hat{a}$, $(\hat{b})_m:=\hat{b} $, $ (\tilde{C})_m:=C((\tilde{a})_m,(\tilde{b})_m)$, $(\hat{C})_m:=C((\hat{a})_m,(\hat{b})_m) $ and $(\lambda^*)_m := \lambda(j^*) $ where $ i^* $, $ j^* $, $ \tilde{a} $, $ \tilde{b} $, $ \hat{a} $ and $ \hat{b} $ are defined as in iteration $ m $ of Algorithm $ E_k $.

	We use induction on $ m $ to show that for all $ m\le l $ :
	\[(C)_{m}\subset(C)_{m-1}.\]
	The base case $ m=0 $ is trivial. For the inductive step, first note that $ (\lambda^*)_m\le f^*$ for all $ m\le l $ and $ (\lambda^*)_1\le\dots\le(\lambda^*)_{l} $. The latter is obtained from Lemma~\ref{lem:C1C2} and the former is because at each iteration, $S\subset\bigcup_{i=0}^{m} C(a(i),b(i))$.
	Constraint~\eqref{const: choiceOfBranch} at iteration $ m $, implies that
	\begin{align}
	\label{eq:namosavie4}
			& B_k((a)_m,(b)_m)\le (\lambda^*)_m+\frac{m\,\eta}{l+1}.
	\end{align}
	
	Now we will show that again, Constraint~\eqref{const: choiceOfBranch} at iteration $ m+1 $ is satisfied at least by one of $ (\tilde{C})_m $ and $ (\hat{C})_m $. Suppose this is not true. Then we can write
	\begin{align}
	\label{eq:inequalitiesProof}
	(\lambda^*)_{m+1}<&  B_k((\tilde{a})_m,(\tilde{b})_m)-\frac{(m+1)\,\eta}{1+l},\\
	(\lambda^*)_{m+1}<&  B_k((\hat{a})_m,(\hat{b})_m)-\frac{(m+1)\,\eta}{1+l}\nonumber .
	\end{align}
	Now, since $ (\lambda^*)_{m+1}\ge(\lambda^*)_{m} $, Eq.~\eqref{eq:inequalitiesProof} implies
	 \begin{align}
	 \label{eq:inequalities 3}
	 (\lambda^*)_{m}<&  B_k((\tilde{a})_m,(\tilde{b})_m)-\frac{(m+1)\,\eta}{1+l},\\
	 (\lambda^*)_{m}<&  B_k((\hat{a})_m,(\hat{b})_m)-\frac{(m+1)\,\eta}{1+l}\nonumber .
	 \end{align}
	 Using Eq.~\eqref{eq:inequalities 3} and Eq.~\eqref{eq:namosavie4} one can write
	 \begin{align}
	 \label{eq:akharin}
	 	B_k((a)_m,(b)_m) <& B_k((\tilde{b})_m,(\tilde{b})_m)-\eta/l,\\
	 	B_k((a)_m,(b)_m) <& B_k((\hat{a})_m,(\hat{b})_m)-\eta/l\nonumber.
	 \end{align}
	 This contradicts the fact that all $ B_k((\tilde{a})_m,(\tilde{b})_m) $, $ B_k((\hat{a})_m,(\hat{b})_m) $ and $B_k((a)_m,(b)_m)$ have  accuracy higher than $ \eta/(1+l) $. Therefore, it is clear that  both $ (\tilde{C})_m $ and $ (\hat{C})_m$ can be possible choices to be bisected at  iteration $ m+1 $. This fact, together with the induction hypothesis which certifies that $ (C)_m $ possesses the smallest volume between all the hyper-rectangles obtained up to that iteration, guaranteeing that either $ (\tilde{C})_m $ or $ (\hat{C})_m $, will be branched at the next iteration. Therefore, the algorithm will generate $ l $ nested hyper-rectangles.

	 Now,  $ (\lambda^*)_0\in[f^*-\eta/l,f^*] $ implies that
	 \begin{align}
	 \label{eq:namosavi}
	 &(\lambda^*)_m\in[f^*-\eta/l,f^*],\;\text{ for all }  m=1,\dots,l.
	 \end{align}
	 Eq.~\eqref{eq:namosavie4} and Eq.\eqref{eq:namosavi} together with the fact that $ (\lambda)_m\ge(\lambda^*)_m $ imply
	 \begin{align}
	 \label{eq:namosavi2}
	 &(\lambda)_m\in\left[f^*-\eta/l,f^*+\frac{m\,\eta}{l+1}\right],\;\forall\;m=1,\dots,l.
	 \end{align}
	 Finally, as a special case $ m=l $, one can write:
	 \[ f^*-\frac{\eta}{1+l} \le B_k((a)_{l},(b)_{l}) \le f^*+\frac{l\,\eta}{l+1}. \]
	 Now, note that the $ \eta/(l+1) $-accuracy of $ B_k((a)_{l},(b)_{l})$ implies that  $ (C)_{l} $ is feasible. It also can be implied that $ (C)_{l}\cap S $ contains $ y $ such that $f(y) \ge B_k((a)_{l},(b)_{l} \ge f(y)-\eta/(l+1) $. Therefore, $|f(y)-f^*|\le \eta\le\epsilon.$
	
	 Finally, if $ x $ is the point return by Algorithm $ E_k $, then based on the definition of $ l $, it is implied that after the last iteration $ m=l-1 $, the largest diagonal of the branched hyper-rectangle is less than $ \eta\le\epsilon $, hence $ \Vert y-x\Vert_2\le\epsilon $, as desired.
\end{proof}

 Theorem~\ref{theo: complexity} ensures that for any accuracy $ \epsilon>0 $ there exists a $ k\in\N $ such that Algorithm $ E_k $ returns $\epsilon$-approximate solutions to the GPO problem with a logarithmic bound on the number of branching loops. The following corollary shows that these $\epsilon$-approximate solutions can themselves approximately satisfy the constraints of the original GPO as follows.
 \begin{mycor}
 \label{corollaryLip}
 Let GPO Problem ~\eqref{eq:GPOAsli} have nonempty and compact feasible set that is contained in $ C(a,b)\, $ for some $ a,b\in\R^n $. For any given $ \delta> 0 $, there exists $ \epsilon>0 $ such that if $\epsilon$ and  $ x= E(\eta,l,a,b,f,g_i) $ satisfy the conditions in Theorem~\ref{theo: complexity}, then
\begin{align}\label{eq:Last}
|f(x)-f^*|\le\delta \text{ and }\quad g_i(x)\ge-\delta,\;\forall i=1,\dots,s.
\end{align}

 \end{mycor}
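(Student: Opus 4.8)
The plan is to leverage the smoothness of the data: since $f$ and every $g_i$ are polynomials, they are continuously differentiable and hence Lipschitz continuous on the compact hyper-rectangle $C(a,b)$. The returned point $x=\frac{a(l)+b(l)}{2}$ is the centroid of a hyper-rectangle nested in $C(a,b)$, and the feasible witness $y$ from Theorem~\ref{theo: complexity} lies in $S\subset C(a,b)$; thus both $x$ and $y$ belong to this compact set, where the Lipschitz estimates apply. I would fix Lipschitz constants $L_f$ and $L_{g_i}$ for $f$ and $g_i$ on $C(a,b)$ and set $L^*:=\max\{L_f,L_{g_1},\dots,L_{g_s}\}$.

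First I would invoke Theorem~\ref{theo: complexity} to obtain, for the chosen $\epsilon$ (with $\eta<\epsilon$ and $l$ as prescribed), a suitable $k$ and a feasible point $y\in S$ satisfying $f(y)-f^*\le\epsilon$ and $\norm{y-x}<\epsilon$. Because $y\in S$ and $f^*$ is the minimum of $f$ over $S$, we automatically have $f(y)\ge f^*$, so that $0\le f(y)-f^*\le\epsilon$.

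Next I would propagate these bounds to $x$ using Lipschitz continuity. For the objective,
\[
\abs{f(x)-f^*}\le\abs{f(x)-f(y)}+\abs{f(y)-f^*}\le L_f\norm{x-y}+\epsilon<(L_f+1)\,\epsilon,
\]
and for each constraint, using $g_i(y)\ge 0$,
\[
g_i(x)\ge g_i(y)-\abs{g_i(x)-g_i(y)}\ge -L_{g_i}\norm{x-y}>-L^*\epsilon.
\]
Finally, given the target $\delta>0$, I would choose $\epsilon:=\delta/(L^*+1)$ (and $\eta$, $l$ accordingly), so that $(L_f+1)\epsilon\le(L^*+1)\epsilon=\delta$ and $L^*\epsilon<\delta$, delivering $\abs{f(x)-f^*}\le\delta$ and $g_i(x)\ge-\delta$ for all $i$.

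There is no substantial obstacle here; the statement is essentially a Lipschitz perturbation of the conclusion of Theorem~\ref{theo: complexity}. The only point requiring care is to justify that the Lipschitz constants are finite and uniform over all instances, which holds because $C(a,b)$ is fixed and compact and the gradients of $f$ and $g_i$ are continuous, hence bounded, on $C(a,b)$; in particular the constants depend only on $f$, $g_i$, $a$, $b$ and not on $\epsilon$ or $k$.
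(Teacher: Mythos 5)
Your proof is correct and follows essentially the same route as the paper's: both use the Lipschitz continuity of $f$ and the $g_i$ on the compact set $C(a,b)$, take the feasible witness $y$ from Theorem~\ref{theo: complexity}, and propagate the bounds to $x$ via the triangle inequality, choosing $\epsilon$ proportional to $\delta/L$. If anything, your choice $\epsilon=\delta/(L^*+1)$ is slightly more careful than the paper's $\epsilon\le\delta/L$, since the objective estimate $|f(x)-f^*|\le L\Vert x-y\Vert+|f(y)-f^*|\le (L+1)\epsilon$ genuinely needs the extra $+1$ that the paper leaves implicit in its ``straightforward'' final step.
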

 \begin{proof}
 Let $ L$ be such that any polynomial $ h \in\{ f,g_1,\dots,g_s\} $ satisfies
 $|h(c)-h(d)|\le L|c-d|_{2},\;\forall c,d\in C(a,b)$. ( Existence of $ L $ follows from the Lipschitz continuity of polynomials on compact sets.) Choose $ \epsilon $ such that $ \epsilon\le \delta/L $. Let $ \epsilon $ and $ x $ satisfy the conditions in Thoerem~\ref{theo: complexity}. It is straightforward to show that  $ x $ satisfies Eq.~\eqref{eq:Last}.
 \end{proof}

Unfortunately, of course, Theorem~\ref{theo: complexity} does not provide a bound on the size of $ k $ (although the proof implies an exponential bound).\\
%
%
%
%

\section{Numerical Results}
\label{section: NE}
Consider the following GPO problem.
\begin{align*}
\quad \underset{x\in\mathbb{R}^6}{min.} & \quad  f(x)=7x_1 x_5 ^3 + 6x_1 x_5 ^2 x_6 + 9x_2 x_4 ^3 + 4 x_2 x_4 x_5 +\\ & \quad\quad\quad\quad 3x_2 x_5 x_6 + x_3 x_4 x_5\\
\text{subject to } & \;\;\text{ $ g_1(x) =100-(x_1 ^2+x_2 ^2+x_3 ^2+x_4 ^2+x_5 ^2+x_6 ^2) \ge 0$}\\
& \;\; \;\,\text{$ g_2(x)=x_1^ 3 + x_2 ^2 x_4 + x_3x_5^2 \ge 0$}\\
& \;\; \;\,\text{$ g_3(x)= x_2^2x_1+x_5^3+x_4x_1x_2 \ge 0$}\\
& \;\; \;\,\text{$ h_1(x)= x_1+x_2^2-x_3^2+x_4x_5 = 0$}\\
& \;\; \;\,\text{$ h_2(x)=x_5x_1-x_4^2 = 0$}
\end{align*}
In this example we have 6 variables, an objective function of degree 4 and several equality and inequality constraints of degree 4 or less. The ideal generated by equality constraints is not zero dimensional, hence the Moment approach to extracting solutions fails. We applied Algorithm $ E_5 $ to this problem with parameters $ \eta=0.005 $ and $ l=200 $, using Sedumi to solve the SDPs associated with the SOS and Moment problems.  As seen in Figure~\ref{fig:figure}, the branch and bound algorithm converges relatively quickly to a certain level of error and then saturates. Iterations past this point do not significantly improve accuracy of the feasible point. As predicted, this saturation and residual error (blue shaded region) is due the use of a fixed degree bound $k=5$. As $k$ is decreased, the residual error increases and as $k$ is increased the residual error decreases. For this problem the final iteration returns the point $\hat{x}= [5.1416,\;3.9307,\;   0.7568,\;   -4.6777,\;    4.2676,\;   -4.1504 ] $ for which all inequalities are feasible and the equality constraints $h_1$ and $h_2$ have errors of $0.0563$ and $0.0610$, respectively. The objective value is $f(\hat{x})=-3693.3$.

\begin{figure}[ht]
	\centering
	\subfigure[Sum of errors in inequality constraints
	at center, $\sum_{g_i(\mathbf{x}_m)\le 0}| g_i(\mathbf{x}_m)|$  vs. number of iteration]{
	\hspace{-0.11in}	\includegraphics[scale=0.21]{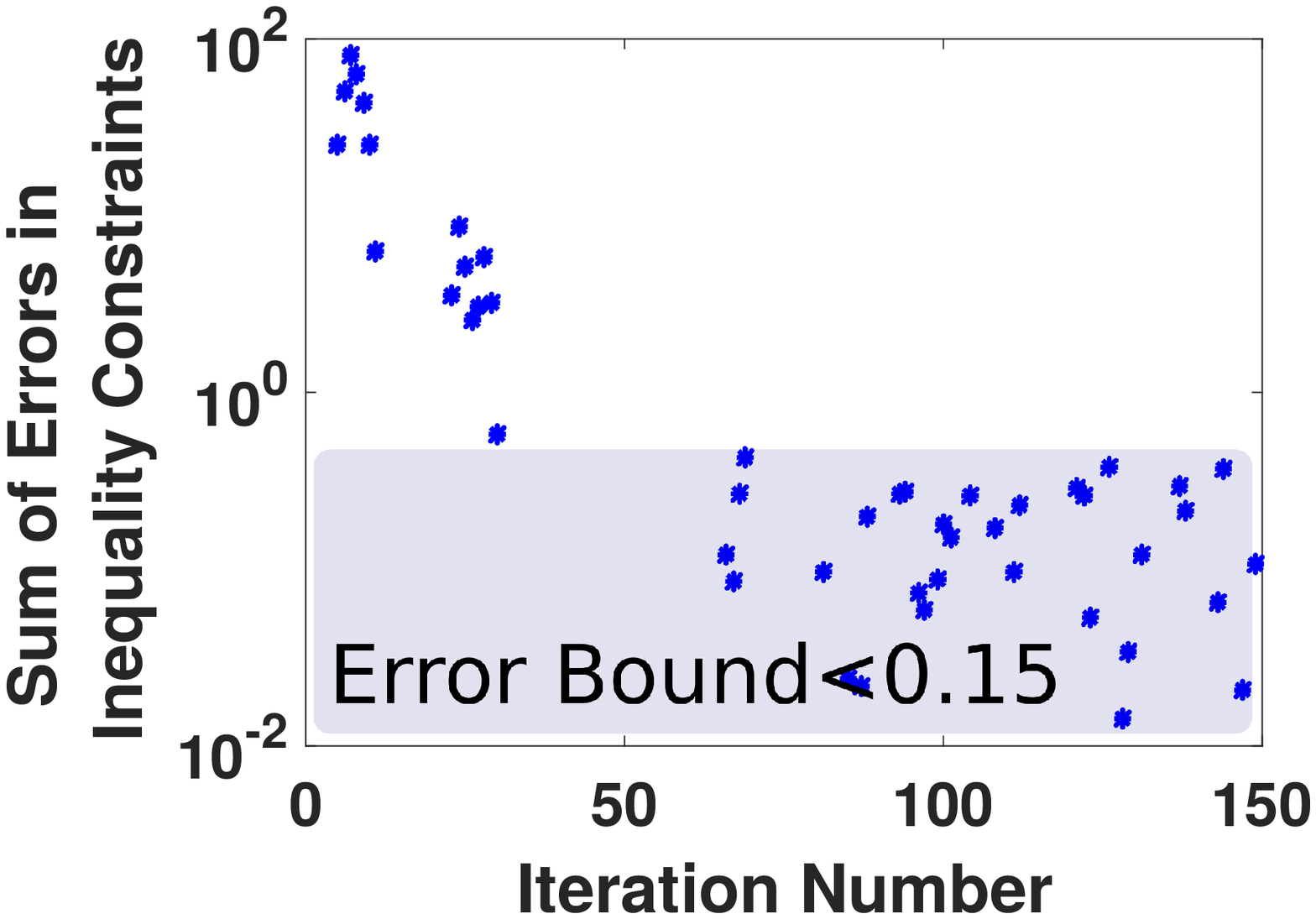}
		\label{fig:subfigure1}}
	\hspace{+0.00in}
	\subfigure[ Sum of errors in equality constraints
	at center, $\sum_{j}| h_j(\mathbf{x}_m)|$  vs. number of iteration]{%
		\includegraphics[scale=0.21]{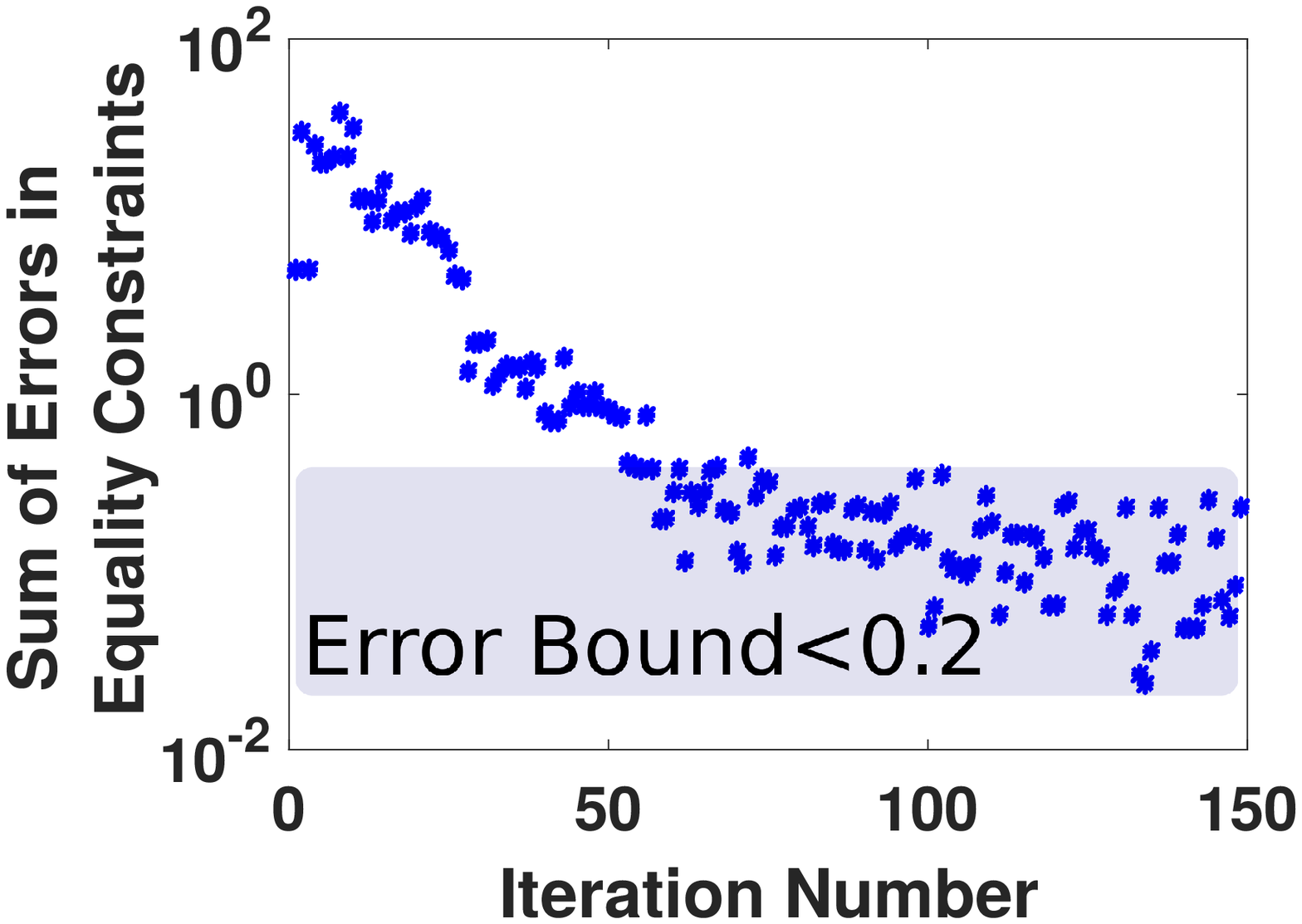}
		\label{fig:subfigure2}}
	\subfigure[Gap between lower bound and objective
	value at center point, $|f(\mathbf{x}_m)-l|$ vs. number of iteration]{%
	\hspace{-0.15in}	\includegraphics[scale=0.21]{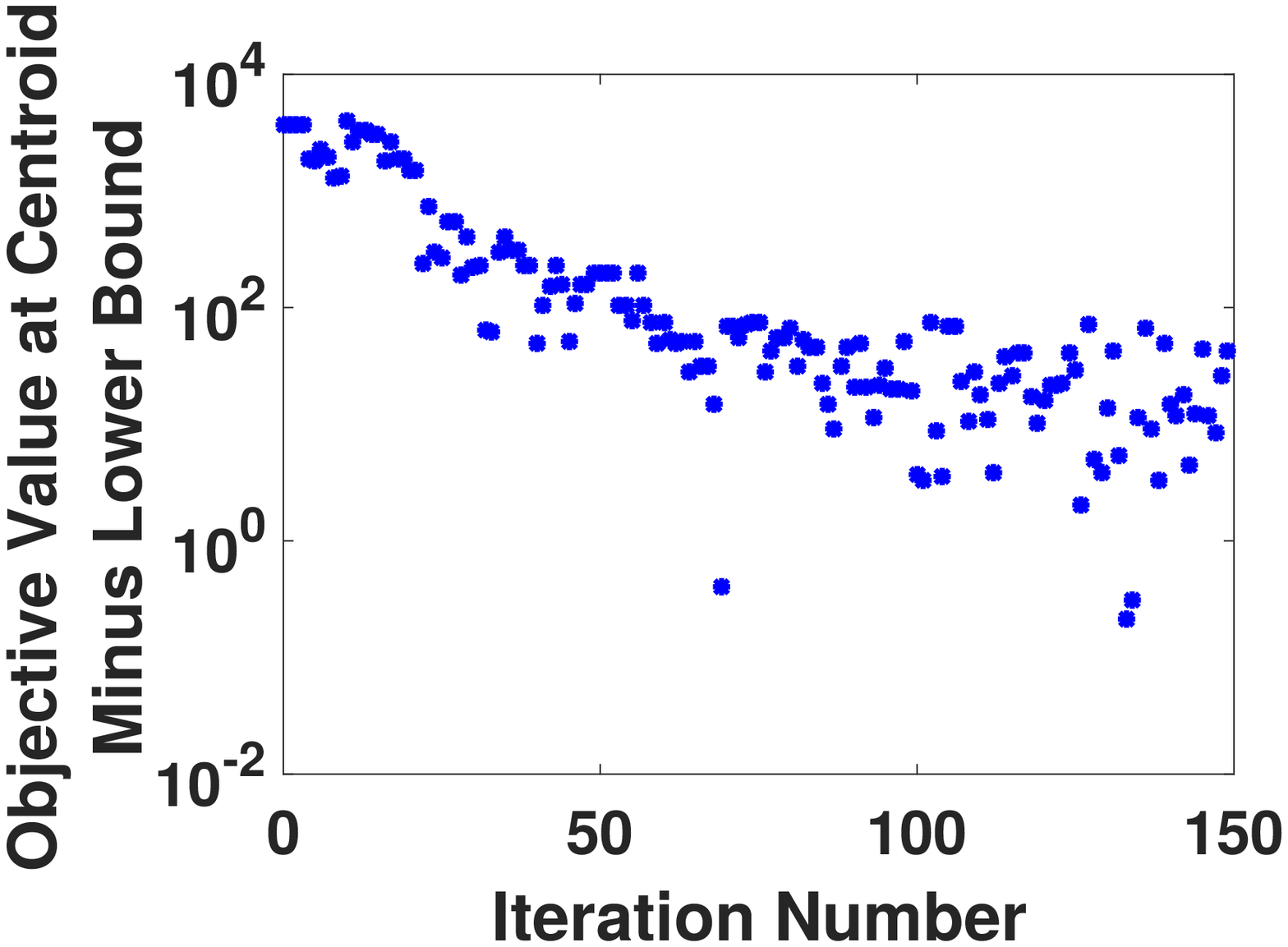}
		\label{fig:subfigure3}}
	\hspace{+0.04in}
		\subfigure[Lower bound vs. number of iteration]{%
			\hspace{-0.11in}\includegraphics[scale=0.21]{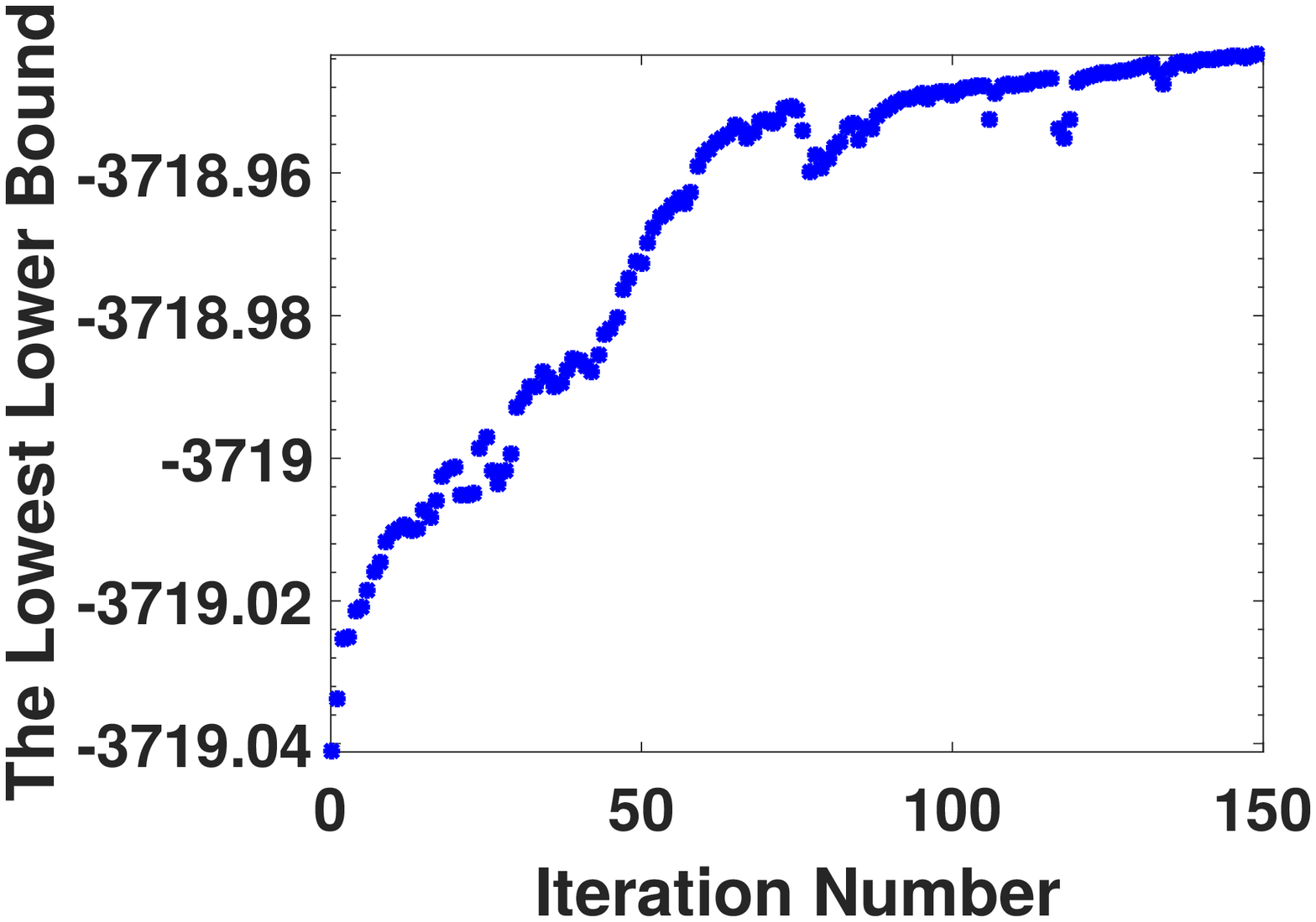}
			\label{fig:subfigure6}}
	\subfigure[Lower bound vs. number of iteration]{%
			\hspace{-0.11in}\includegraphics[scale=0.21]{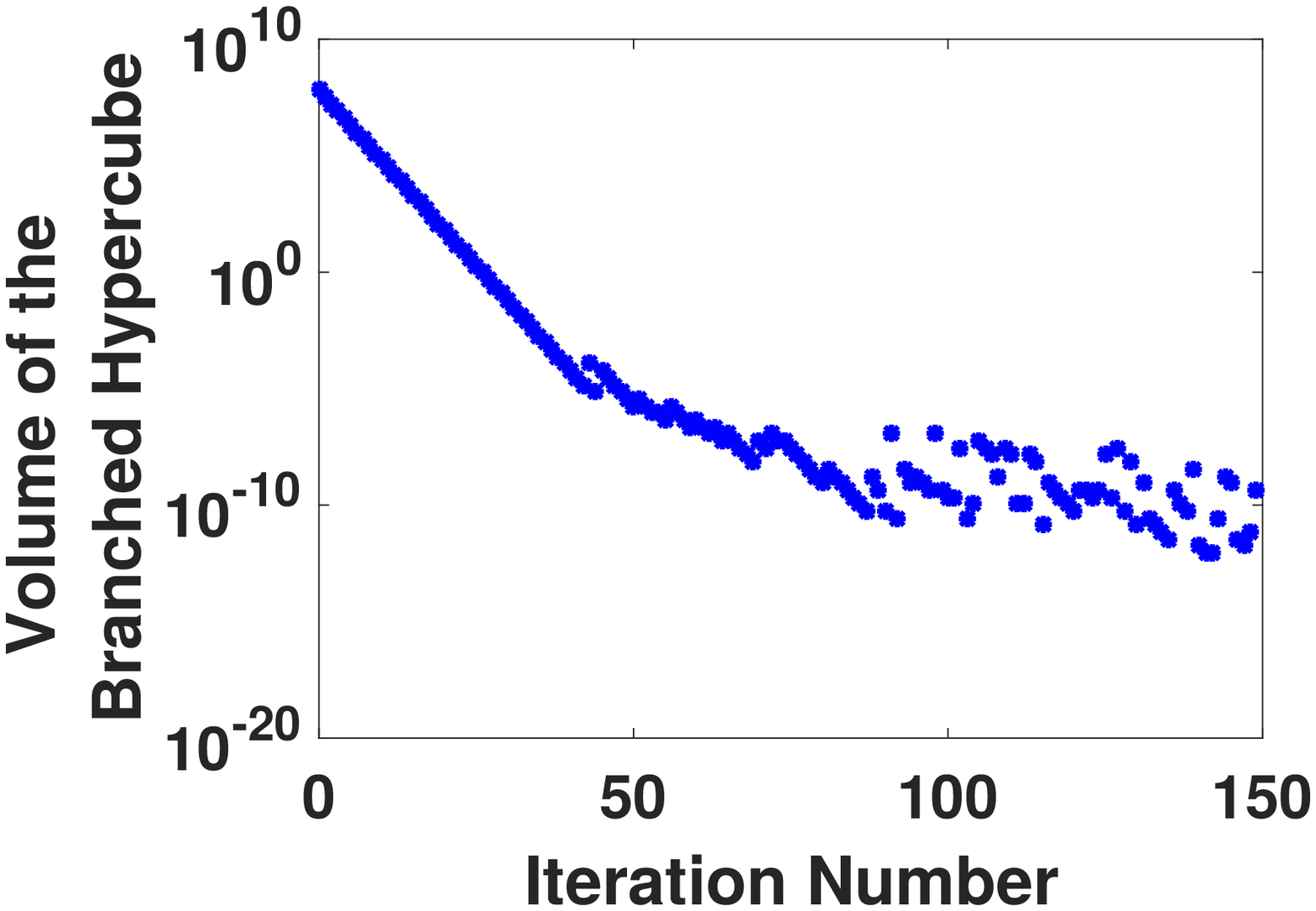}
			\label{fig:subfigure7}}
	\subfigure[Longest edge of the branched hypercube vs. number of iteration]{%
		\includegraphics[scale=0.21]{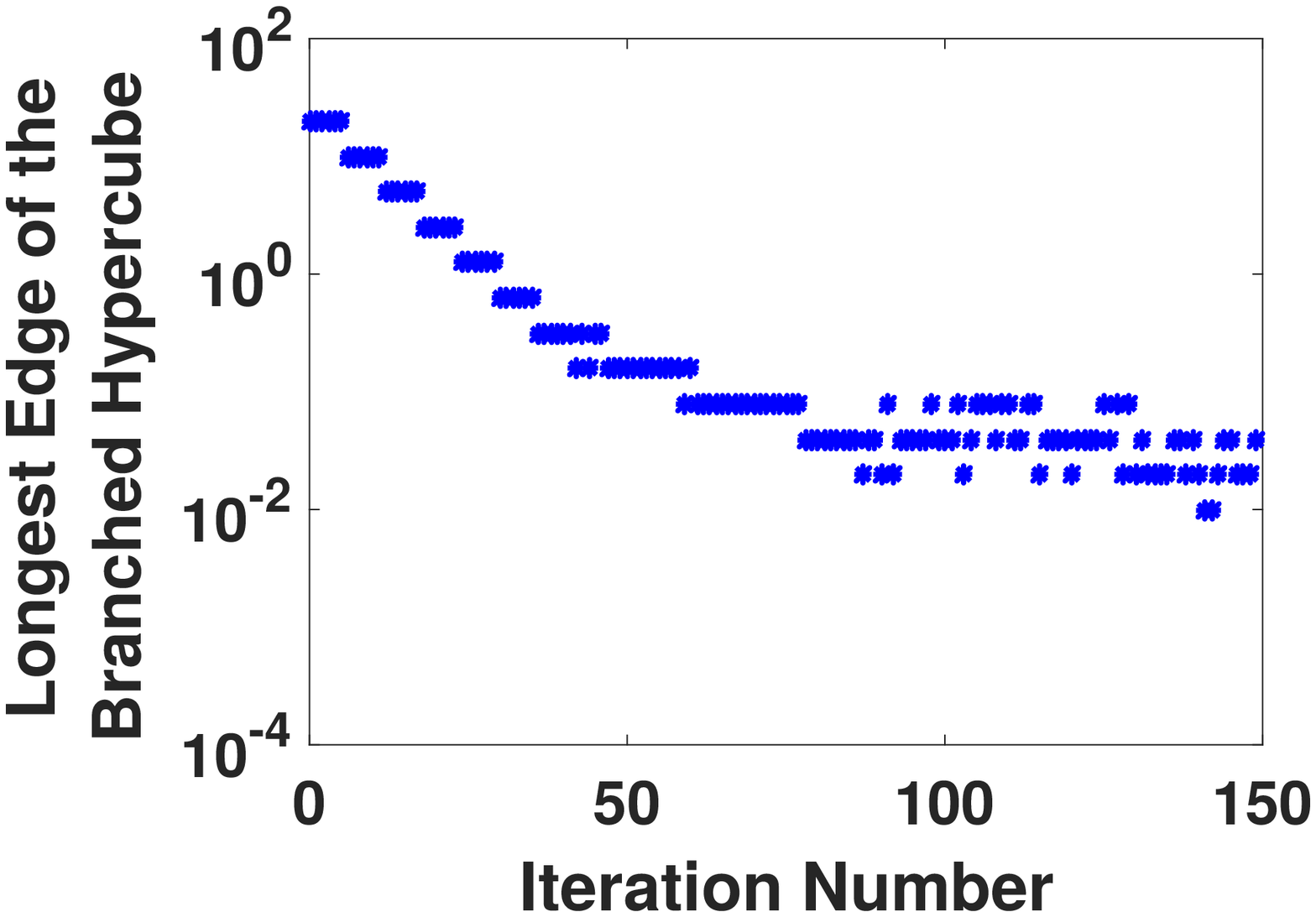}
		\label{fig:subfigure4}}
	\quad
	\subfigure[Objective function value at center point vs. number of iteration]{%
		\hspace{-0.11in}\includegraphics[scale=0.21]{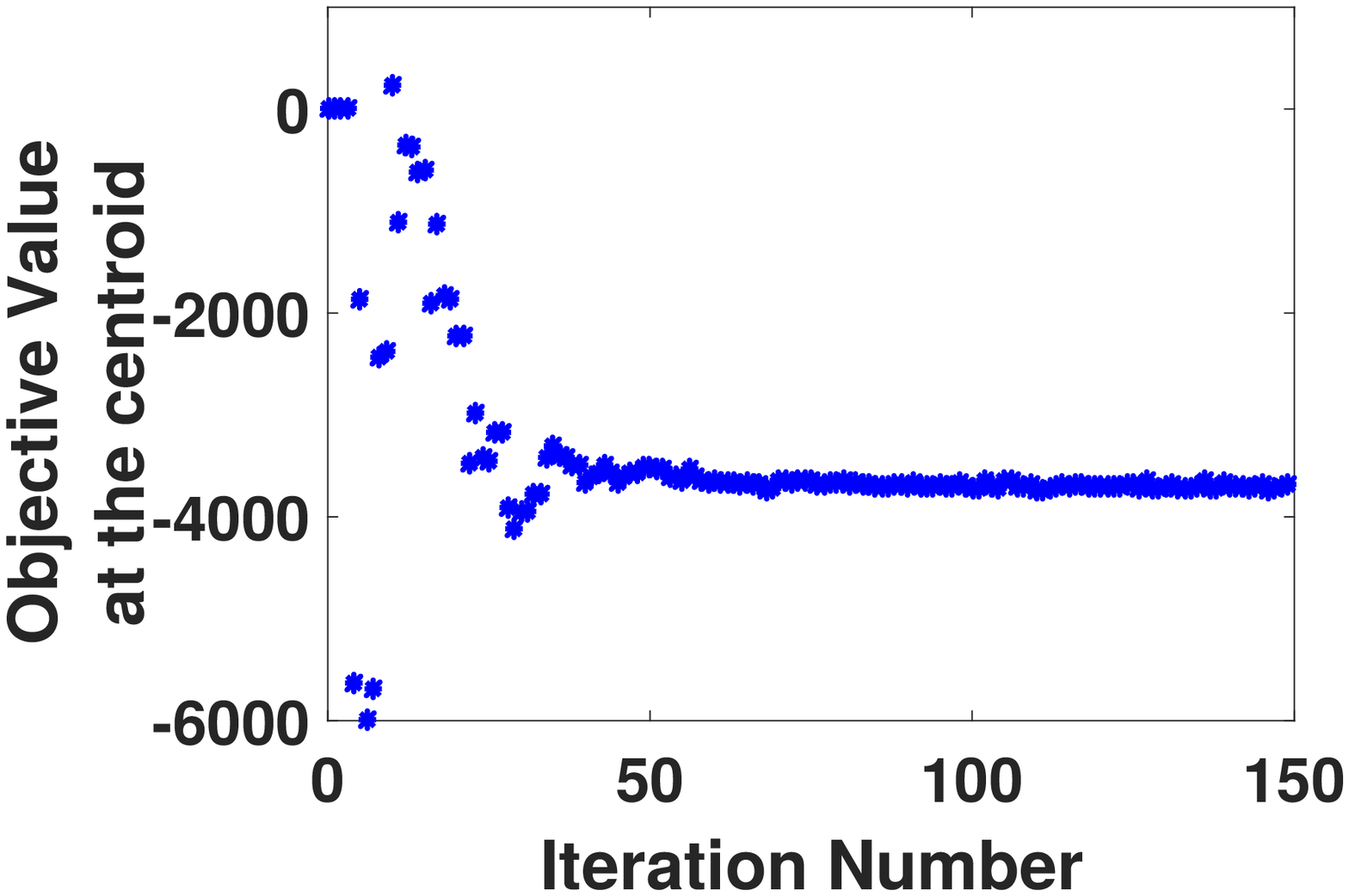}
		\label{fig:subfigure5}}
		\quad

	\caption{Numerical results of the example}
	\label{fig:figure}
\end{figure}

\section{Conclusion}
\label{section:Conclusion}
We have proposed a sequence of Algorithms  $ E_k,\;k\in\N$ to extract solutions to the GPO problem based on a combination of Branch and Bound and SOS/Moment relaxations. The computational-complexity of Algorithm $ E_k $ is polynomial in  $ k $, polynomial in the number of constraints and linear in the number of branches $ l $. Additionally, for any scalar $ \epsilon>0$, there exist $ k\in\N $ such that Algorithms $ E_k$, in $ O(\log(1/\epsilon)) $ number of iterations,  returns a point that is within the $ \epsilon $-distance of  a feasible and $\epsilon$-suboptimal point.
For a fixed degree of semidefinite relaxations, our numerical case study demonstrates convergence to a level of residual error which can then be decreased by increasing the degree.
In ongoing work, we seek to bound this residual error as a function of degree using available bounds on the error of SOS/Moment relaxations.

\appendix
The following lemma gives an algebraic property of the polynomials of the form $ w(x) =(x-a_i)(b_i-x) $ which are used to define the augmented feasible set $S_{ab}$.
\begin{mylem}
	\label{lem:abcd}
	Let $a\le c<d\le b \in \mathbb{R}$, $g := (x-a)(b-x)$ and $h := (x-c)(d-x)$. Then, there exist $\alpha, \beta$ and $\gamma\in \mathbb{R}$, such that
	\[
	g(x) = \alpha h(x) + \beta (x+\gamma)^2\,,\quad \alpha ,\beta \ge 0\vspace{0.06in}
	\]
\end{mylem}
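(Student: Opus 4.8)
The plan is to pin down the three scalars one at a time, exploiting that $g$ and $h$ are downward quadratics sharing the same leading coefficient. Writing $g(x)=-x^2+(a+b)x-ab$ and $h(x)=-x^2+(c+d)x-cd$, I would first match the coefficient of $x^2$ in the desired identity: since $\beta(x+\gamma)^2$ contributes $\beta x^2$, equating leading terms forces $\beta=\alpha-1$. Thus the constraint $\beta\ge 0$ is equivalent to $\alpha\ge 1$, and the whole problem collapses to finding a single scalar $\alpha\ge 1$ for which the quadratic $g-\alpha h$ is a perfect square $\beta(x+\gamma)^2$.

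Expanding, $g(x)-\alpha h(x)=(\alpha-1)x^2+[(a+b)-\alpha(c+d)]x+(\alpha cd-ab)$, whose leading coefficient is exactly $\beta=\alpha-1\ge 0$. Hence it is a nonnegative perfect square precisely when its $x$-discriminant vanishes, so I would study
\[ D(\alpha):=[(a+b)-\alpha(c+d)]^2-4(\alpha-1)(\alpha cd-ab)=(c-d)^2\alpha^2+E\alpha+(a-b)^2, \]
where $E=4(ab+cd)-2(a+b)(c+d)$. This is itself a quadratic in $\alpha$ with positive leading coefficient $(c-d)^2>0$, and its two roots multiply to $(a-b)^2/(c-d)^2=\kappa^2$, where $\kappa:=(b-a)/(d-c)\ge 1$ (the inequality using $a\le c<d\le b$, so $b-a\ge d-c$). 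The natural candidate is therefore the geometric mean $\kappa$ of the two roots.

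The crux of the argument is to evaluate $D$ at $\kappa$ and show it is nonpositive. A short computation gives the factorization
\[ D(\kappa)=-\,\frac{4(c-a)(b-d)(b-a)}{d-c}, \]
which is $\le 0$ since $c-a\ge 0$, $b-d\ge 0$, and $b-a,\,d-c>0$. Because $D$ is an upward parabola taking a nonpositive value at $\kappa$, its roots are real and straddle $\kappa$; in particular the larger root $\alpha^*$ satisfies $\alpha^*\ge\kappa\ge 1$. Taking $\alpha=\alpha^*$ gives $D(\alpha^*)=0$ with leading coefficient $\alpha^*-1\ge 0$, so $g-\alpha^* h$ is a genuine perfect square $(\alpha^*-1)(x-x_0)^2$; setting $\beta=\alpha^*-1$ and $\gamma=-x_0$ (with $x_0$ the double root) yields the claim, the degenerate case $g=h$ being handled by $\alpha=1,\beta=0$. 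I expect the sign computation of $D(\kappa)$ to be the only genuinely nontrivial step: the clever point is that evaluating the discriminant at the geometric mean of its roots turns the existence question into a transparent product of the gaps $c-a$, $b-d$, $b-a$, $d-c$, and simultaneously certifies that the roots are real and that one of them lies at or above $1$. Everything else is bookkeeping.
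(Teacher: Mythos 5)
Your proof is correct, and it takes a genuinely different route from the paper's. The paper proceeds by explicit construction: it normalizes $a=0$, substitutes $p^2:=c$, $q^2:=d-c$, $r^2:=b-d$, and writes down closed-form values of $\gamma$, $\beta$, and $\alpha=\beta+1$ in several cases ($r^2\neq p^2$, $r^2=p^2$, and the edge cases $p^2=0$ and/or $r^2=0$), then verifies the identity and the sign condition $\beta\ge 0$ by direct algebra, including a somewhat delicate chain of equivalences comparing $L$ and $U$ to handle the square root $\sqrt{p^2r^2(p^2+q^2)(q^2+r^2)}$ appearing in $\gamma$. You instead note that matching leading coefficients forces $\beta=\alpha-1$, reduce everything to finding a root $\alpha^*\ge 1$ of the discriminant quadratic $D(\alpha)=(c-d)^2\alpha^2+E\alpha+(a-b)^2$, and certify existence by evaluating at the geometric mean $\kappa=(b-a)/(d-c)\ge 1$ of its roots; I verified your key factorization $D(\kappa)=-4(c-a)(b-d)(b-a)/(d-c)\le 0$ (both sides are translation-invariant, so checking at $a=0$ suffices, where both equal $-4bc(b-d)/(d-c)$), so the upward parabola $D$ has a real larger root $\alpha^*\ge\kappa\ge 1$, and $g-\alpha^*h=(\alpha^*-1)(x-x_0)^2$ as claimed. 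In effect both proofs solve the same vanishing-discriminant condition --- the paper's surd is precisely the quadratic-formula solution of $D(\alpha)=0$ --- but your existence argument is unified and case-free, trading the paper's explicit, computationally usable formulas for a cleaner positivity certificate. One line worth adding in a polished write-up: when $\alpha^*=1$, vanishing of $D(1)=[(a+b)-(c+d)]^2$ kills only the linear coefficient of $g-h$, not a priori the constant $cd-ab$; but since the roots of $D$ multiply to $\kappa^2$ and $\alpha^*\ge\kappa\ge 1$, the equality $\alpha^*=1$ forces $\kappa=1$, hence $b-a=d-c$ and $a+b=c+d$, so $a=c$, $b=d$ and $g\equiv h$ --- exactly the degenerate case you flag, so there is no actual gap.
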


\begin{proof}
	Without loss of generality, one can assume that $a = 0$ (consider the change of variable $z := x-a$). Now let $p^2:=c$, $q^2:= d-c$, and $r^2:=b-d$.
	First, we consider the case where $p^2,r^2\neq 0$. This leads to two sub-cases:
	
\noindent \textbf{Case 1 :} $r^2\neq p^2$. Let
	\begin{footnotesize}
		\[
		\gamma=  \frac{p^4+p^2q^2-\sqrt{p^2r^2(p^2+q^2)(q^2+r^2)}}{r^2-p^2}\,,\,
		\beta = \frac{p^4+p^2q^2}{\gamma^2-p^4-p^2q^2}\,,\,
		\]
	\end{footnotesize}
	and $\alpha = \beta+1$. Verifying the equality $g(x) = \alpha h(x) + \beta (x+\gamma)^2$ is straightforward. To show that $\beta,\alpha \ge 0$, we use the following.
	\begin{footnotesize}
		\[\beta\ge 0 \iff  \gamma^2> p^4+p^2q^2 \]
		\[\iff\Big(p^4+p^2q^2-\sqrt{p^2r^2(p^2+q^2)(q^2+r^2)}\Big)^2>(p^4+p^2q^2)(r^2-p^2)^2\]
		\[
		\iff\underbrace{(p^4+p^2q^2)^2+p^2r^2(p^2+q^2)(q^2+r^2)-(p^4+p^2q^2)(r^2-p^2)^2}_{L}>
		\]
		\[
		\underbrace{2\big(p^4+p^2q^2\big)\sqrt{p^2r^2(p^2+q^2)(q^2+r^2)}}_{U}\iff\begin{cases}
		L>0\\
		L^2>U^2
		\end{cases}
		\]
	\end{footnotesize}
	After simplification we have:
	
	\begin{footnotesize}
		\[L^2-U^2 = p^4q^4(p^2+q^2)^2(p^2-r^2)^2> 0,\quad \text{and }\]
		\[L = p^2\, \left(p^2 + q^2\right)\, \left(p^2\, q^2 + 2\, p^2\, r^2 + q^2\, r^2\right)>0\]
	\end{footnotesize}
		which completes the proof for Case 1.\vspace{0.05in}\\
\noindent \textbf{Case 2 :} $r^2 = p^2$. In this case, let
	\begin{footnotesize}
		\[
		\gamma = -\frac{2p^2+q^2}{2}\,,\,
		\beta = \frac{4p^2(p^2+q^2)}{q^4}\,,\,
		\alpha = \beta+1
		\]
	\end{footnotesize}
Equality and positivity for this case can then be easily verified. Now, suppose $r^2=p^2=0$. In this case, simply set $\beta=0,\,\alpha=1$. If $p^2=0,r^2\neq 0$, set $\beta=\frac{b}{d}-1,\,\alpha=\frac{b}{d},\,\gamma=0$. The case $p^2\neq 0$, $r^2= 0$ is similar to $p^2=0$, $r^2\neq 0$,through the change of variable $z=b-x$.
\end{proof}

\bibliographystyle{ieeetr}
\bibliography{ReferencesIFAC2017}
\end{document}